\newcommand{\ssup}[1] {{\scriptscriptstyle{({#1}})}}
\newcommand{\me}{{\mathbf E}}
\renewcommand{\mp}{{\mathbf P}}
\newtheorem{thm}{Theorem}[section]
\newtheorem{lemma}[thm]{Lemma}
\newtheorem{corollary}[thm]{Corollary}
\newtheorem{prop}[thm]{Proposition}
\theoremstyle{definition}
\newtheorem{rem}[thm]{Remark}
\newcommand{\be}[1]{\begin{equation}\label{#1}}
\newcommand{\ee}{\end{equation}}
\newcommand{\ba}{\begin{array}}
\newcommand{\ea}{\end{array}}
\newcommand{\R}{\mathbb{R}}
\newcommand{\N}{\mathbb{N}}
\newcommand{\Q}{\mathbb{Q}}
\newcommand{\E}{\mathbb{E}}
\newcommand{\p}{\mathbb{P}}
\newcommand{\calF}{\mathcal{F}}
\newcommand{\calG}{\mathcal{G}}
\newcommand{\calV}{\mathcal{V}}
\newcommand{\esssup}{\mathrm{ess}  \sup}
\newcommand{\1}{\mathbf{1}}
\newcommand{\inv}{^{-1}} 
\newcommand{\la}{\lambda}
\newcommand{\eps}{\varepsilon}
\renewcommand{\phi}{\varphi}
\newcommand{\specialh}{h}
\newcommand{\tV}{\tilde{V}}
\newcommand{\ra}{\rightarrow}
\newcommand{\da}{\downarrow}
\newcommand{\Teps}{T^{\ssup \eps}}
\newcommand{\Ttilde}{\tilde{T}}
\newcommand{\tT}{\tilde{T}}
\newcommand{\st}{\textbf{\textsf{SpinedTrees}} }
\newcommand{\ep}{\mathbb{P}}
\newcommand{\eE}{\mathbb{E}}
\newcommand{\bias}{*} 
\begin{document}

\begin{center}
{\LARGE \bf Minimal supporting subtrees for the free energy of polymers on disordered trees}
\vspace{0.5cm}\\
 
\textsc{Peter M\"orters} and \textsc{Marcel Ortgiese}\\
Department of Mathematical Sciences\\
University of Bath\\
Bath BA2 7AY\\
United Kingdom
\vspace{0.2cm}\\

\end{center}

\vspace{0.5cm}

\begin{quote}{\small {\bf Abstract: }
We consider a model of directed polymers on a regular tree
with a disorder given by independent, identically distributed 
weights attached to the vertices. For suitable weight distributions
this model undergoes a phase transition with respect to its localization 
behaviour. We show that, for high temperatures, the free energy is supported by a random
tree of positive exponential growth rate, which is strictly smaller than that 
of the full tree. The growth rate of the minimal supporting subtree 
is decreasing to zero as the temperature decreases to the 
critical value. At the critical value and all lower temperatures, a single polymer 
suffices to support the free energy. Our proofs rely on elegant martingale methods adapted
from the theory of branching random walks.
}
\end{quote}

\section{Introduction and main results}

In this paper we give a detailed study of the phase transition arising from the presence of a 
random disorder in the very basic model of polymers on disordered trees introduced by Derrida 
and Spohn in~\cite{DS88}. This phase transition becomes manifest in the behaviour of the free energy, 
see~\cite{BPP93}, but also in the localization behaviour of the model, which is measured in terms of 
the size of the smallest subtree supporting the free energy. The model can be seen as a 
mean-field version of the popular model of a directed polymer in random environment, where most of 
the questions settled here are still open. For a survey of directed polymers see~\cite{CSY04}, and 
note also the recent important results obtained by Comets and Yoshida in~\cite{CY06}.
\smallskip

For a precise description of the polymers on disordered trees, let $d\ge 2$ and $T$ be a $d$-ary tree such that, 
starting from an initial ancestor in generation~$0$, the \emph{root}~$\rho$, each vertex has exactly
$d$~\emph{children}. A \emph{polymer} is a finite or infinite self-avoiding path started in the root. 
We write $|v|$ for the 
generation of a vertex~$v$ and denote by $T_n = \{ v \in T \, : \, |v| =n\}$ the set of vertices in 
the $n$th~generation. Each $v\in T_n$ can be identified with the unique path $(v_0,v_1, \ldots, v_n)$ of 
its ancestors from $v_0=\rho$ to $v_n = v$, and thus represents a polymer of length~$n$.
\smallskip

Consider a non-degenerate random variable $V$, which has all exponential moments, i.e.
\[ \me[ e^{\beta V}] < \infty \quad\mbox{ for all } \beta \in \R \, . \]
Then we introduce the \emph{random disorder} $\calV = (V(v) \, : \, v \in T)$ as a collection of independent
distributed weights with the same distribution as $V$ attached to the vertices of the tree. 
For a finite length polymer $v \in T_n$ we introduce the \emph{Hamiltonian} 
\[ H_n(v) = -\sum_{j=1}^n V(v_j) \, . \]
The \emph{polymer measure} or \emph{finite volume Gibbs measure} $\mu_n^{\ssup \beta}$ on $T_n$ is defined by
\[ \mu_n^{\ssup \beta}  = \frac{1}{Z_n(\beta)} \sum_{v\in T_n} e^{-\beta H_n(v)}
\delta_{v} \, , \]
where $\beta>0$ is the inverse temperature and 
the normalising constant $Z_n(\beta)$ is the \emph{partition function} defined as
\[ Z_n(\beta) = \sum_{v \in T_n} e^{\beta \sum_{j=1}^n V(v_j)} \, . \]

Polymers of infinite length can be represented as a sequence $(\xi_0, \xi_1, \xi_2, \ldots)$ 
of vertices, such that $\xi_{n}$ is a vertex in the $n$th generation, and moreover a child of $\xi_{n-1}$. 
Such sequences are called~\emph{rays} and the set of all rays constitute the \emph{boundary} of the tree, 
denoted by~$\partial T$. We equip the boundary $\partial T$ with the metric
$d(\xi, \eta ) = \exp(- \sup\{ n \geq 0 \, : \, \xi_n =\eta_n \})$, for $\xi, \eta \in \partial T$,
which makes $\partial T$ a compact metric space. 
\smallskip

We  first review some of the basic properties of the model. Roughly speaking, one should
expect that the behaviour of the polymer depends 
on the inverse temperature parameter~$\beta$ in the following manner: If $\beta$ is small, 
we are in an \emph{entropy-dominated} regime, where the disorder has no  big influence and 
limiting features are largely the same as in the case of a uniformly distributed polymer. 
For large values of $\beta$ we may encounter an \emph{energy-dominated} regime where, 
due to the disorder, the phase space breaks up into pieces separated by free energy barriers. 
Polymers then follow specific tracks with large probability, an effect often
called \emph{localisation}.\smallskip
 
The mathematical analysis of polymers on disordered trees is based on the family
of martingales $(M^{\ssup \beta}_n \colon n\ge 0)$ defined by
\[ M^{\ssup \beta}_n = e^{-n(\la(\beta)+\log d)} Z_n(\beta) \, , \qquad \mbox{ for }
n \geq 0 \, , \]
where
\[ \la(\beta) = \log \me e^{\beta V} \, , \]
is the logarithmic moment generating function of~$V$. It is easy to check that, for any
$\beta\ge 0$, $(M^{\ssup \beta}_n \colon n\ge 0)$ is a martingale with respect to the
filtration $\calF_n = \sigma( V(v) \colon |v| \leq n )$, $n \ge 0$.
Since the martingale is non-negative, its limit
$M^{\ssup \beta} = \lim_{n \ra \infty} M^{\ssup \beta}_n$ exists almost surely.
An easy application of Kolmogorov's zero-one law shows that $\p \{ M^{\ssup \beta} = 0 \} \in \{ 0,1\}$. 
\smallskip

Define the function 
$$ f(\beta) = \la(\beta) + \log d - \beta \la'(\beta) \qquad \mbox{ for $\beta \ge 0$.} \\[1mm]$$
From the strict convexity of~$\lambda$, we infer that $f(\beta)<\log d$ for all $\beta>0$.
We shall check in Lemma~\ref{criterion_for_root_of_f} below that $f$ has a positive root unless 
the law of~$V$ is bounded from above with an atom of mass $\ge \frac 1d$ at its essential supremum.
Let $\beta_{\rm c}$ be the positive root, if it exists, and $\beta_{\rm c}=\infty$ otherwise.
Kahane and Peyri{\`e}re~\cite{KP76} and Biggins~\cite{Bi77} show that 
\[ \ba{ll} M^{\ssup \beta} > 0 \ \mbox{almost surely,} & \mbox{if } \beta < \beta_{\rm c}, \\
M^{\ssup \beta} = 0 \ \mbox{almost surely,} & \mbox{if } \beta \ge \beta_{\rm c} \, . \ea \]
In particular, they show that $\E [M^{\ssup \beta}] = 1$ if and only if $\beta < \beta_{\rm c}$.
In this paper, we are especially interested in the free energy, defined as
\[ \phi(\beta) = \lim_{n \ra \infty} \frac{1}{n} \log Z_n(\beta) \, . \]
It turns out that $\beta_{\rm c}$, if finite, is the critical parameter for a change in the 
qualitative behaviour of the free energy. Indeed,
\be{free_energy} \phi(\beta) = \left\{ \ba{cc} \la(\beta)+ \log d & \mbox{if } \beta \leq \beta_{\rm c} \, , \\[2mm] 
\frac{\beta}{\beta_{\rm c}}\, \big( \la(\beta_{\rm c})+\log d\big) & \mbox{if } \beta > \beta_{\rm c} \,. \ea \right. \ee
This result was stated in~\cite{DS88} and proved for a continuous time analogue. 
An elementary proof, based on the study of the martingales~$(M_n^{\ssup\beta}\, : \, n \geq 0)$, 
can be found in~\cite{BPP93}. We observe that at the critical temperature $1/\beta_{\rm c}$ the model 
undergoes a phase transition and, for low temperatures, it is frozen in the ground state. 
The two phases are often called the \emph{weak disorder} phase ($\beta<\beta_{\rm c}$), 
and the \emph{strong disorder} phase ($\beta>\beta_{\rm c}$). 
See Figure~\ref{fig_free_energy} for an illustration.
\smallskip

\begin{figure}[htbp]
\begin{center}
\includegraphics{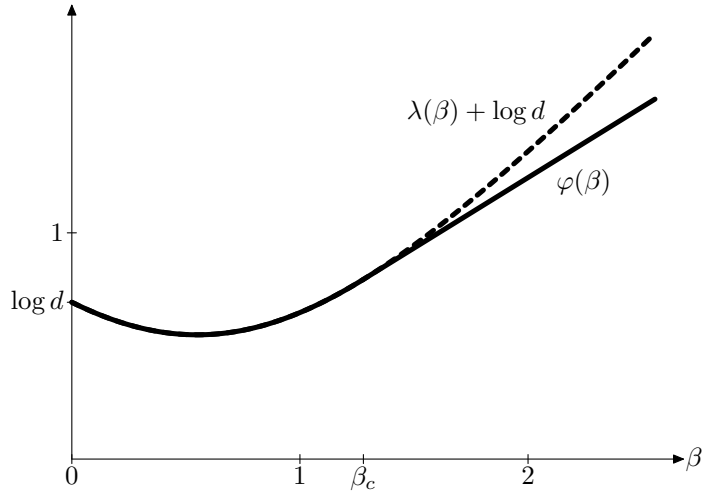}
\caption{The free energy for the model when $\mp \{ V =1 \} = 1/4 = 1 - \mp \{ V = -1\}$ and $d = 2$. }
\label{fig_free_energy}
\end{center}
\end{figure}

In the \emph{weak disorder phase} the form of \eqref{free_energy} seems to suggest that, asymptotically,
each of the $d^n$~polymers~$v \in T_n$ contributes a summand
$$\E [ e^{\beta \sum_{j=1}^n V(v_j)} ] = \exp\big[ n \la(\beta) \big]$$
to the partition function~$Z_n(\beta)$, and therefore the finite volume Gibbs measure 
does not localize on a significantly smaller subset of $T_n$. 
However, our first main result shows that this picture is \emph{wrong} and already a 
vanishing proportion of paths make a significant contribution to the free energy. 
These paths can be chosen to be the vertices of a tree, which we call a \emph{minimal supporting subtree}.

\begin{thm}\label{smaller_tree} 
Let $0<\beta < \beta_{\rm c}$ so that we are in the weak disorder phase.
\begin{itemize}
\item[(a)] Almost surely, there exists a tree $\tT \subset T$ of growth rate
\[ \lim_{n\ra \infty} \frac{1}{n} \log |\tT_n| =  f(\beta)<\log d, \] such that
\[ \lim_{n \ra \infty} \tfrac{1}{n} \log \sum_{v \in \tT_n} e^{\beta \sum_{j=1}^n V(v_j)} = \phi(\beta) \, . \]
	\item[(b)] Almost surely for every sequence $(A_n)_{n \geq 1}$ of non-empty subsets $A_n \subset T_n$ of the 
	vertices in the $n$th generation satisfying
	\[ \limsup_{n\ra \infty} \frac{1}{n} \log |A_n| < f(\beta)  \] 
	we have that 
	\[ \limsup_{n \ra \infty} \frac 1n \log \sum_{v \in A_n} e^{\beta \sum_{j=1}^n V(v_j)} < \phi(\beta) \, . \]
\end{itemize}
\end{thm}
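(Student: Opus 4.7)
Let $a := \lambda'(\beta)$, which is the expected step of the random walk obtained by tilting $V$ with density $e^{\beta V}/\E e^{\beta V}$, and let $I(x):=\sup_t(tx-\lambda(t))$ be the Cram\'er rate function of $V$; write $S_k(v):=\sum_{j=1}^k V(v_j)$. Legendre duality at~$\beta$ gives $I(a)=\beta a-\lambda(\beta)$, hence
\[
\log d-I(a)=f(\beta)\qquad\text{and}\qquad \beta a+\bigl(\log d-I(a)\bigr)=\lambda(\beta)+\log d=\phi(\beta).
\]
So vertices whose empirical slope $S_n(v)/n$ lies near~$a$ number about $e^{nf(\beta)}$ and together carry weight about $e^{n\phi(\beta)}$; strict convexity of $I$ will make other slopes subleading. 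This provides the candidate for $\tT$ and drives the whole argument.

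\textbf{Part (a).} For small $\eps>0$ set
\[
\tT^\eps := \bigl\{v\in T : S_k(v)\in[(a-\eps)k,\,(a+\eps)k]\text{ for all }k\le|v|\bigr\},
\]
which is a genuine subtree by construction. The Biggins form of Cram\'er's theorem for branching random walks yields, almost surely,
\[
\tfrac1n\log|\tT^\eps_n|\;\longrightarrow\; \log d-\inf_{|x-a|\le\eps}I(x)=:f_\eps(\beta),
\]
with $f_\eps(\beta)\uparrow f(\beta)$ as $\eps\downarrow 0$; requiring the tube condition at \emph{every} intermediate generation, rather than only at $n$, costs nothing exponentially by LLN concentration of the tilted walk. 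Since $S_n(v)\ge (a-\eps)n$ on $\tT^\eps_n$,
\[
\sum_{v\in\tT^\eps_n}e^{\beta S_n(v)}\;\ge\;|\tT^\eps_n|\,e^{n\beta(a-\eps)}\;=\;e^{n(\phi(\beta)+o_\eps(1))},
\]
and comparing with $\tfrac1n\log Z_n(\beta)\to\phi(\beta)$ from~\eqref{free_energy} shows that the free energy restricted to $\tT^\eps$ matches $\phi(\beta)$ up to $O(\eps)$. A diagonal extraction along $\eps_k\downarrow 0$ then assembles a single subtree $\tT$ of growth rate exactly $f(\beta)$ supporting the full free energy $\phi(\beta)$.

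\textbf{Part (b).} Suppose, for contradiction, $\limsup\tfrac1n\log|A_n|\le f(\beta)-\delta$ for some $\delta>0$. Fix a mesh $\eta>0$ and a large $R$, and partition $A_n^j:=\{v\in A_n : S_n(v)/n\in[x_j,x_j+\eta)\}$ across a finite grid covering $[-R,R]$; the contribution from $|S_n(v)|>Rn$ is exponentially negligible by a Chebyshev bound on the first moment of $Z_n(\beta)$ for $R$ large. For each $j$ we use the two cardinality bounds
\[
|A_n^j|\le|A_n|\le e^{n(f(\beta)-\delta+o(1))}\quad\text{and}\quad |A_n^j|\le e^{n(\log d-I(x_j)+o_\eta(1))}\text{ a.s.,}
\]
the second via Markov combined with Borel--Cantelli applied uniformly across the finite grid. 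Since $e^{\beta S_n(v)}\le e^{n\beta(x_j+\eta)}$ on $A_n^j$,
\[
\sum_{v\in A_n^j}e^{\beta S_n(v)}\;\le\;\exp\!\bigl\{n\bigl[\beta x_j+\min\bigl(f(\beta)-\delta,\,\log d-I(x_j)\bigr)+O(\eta)\bigr]\bigr\}.
\]
The Legendre identity $\beta x+\log d-I(x)\le\phi(\beta)$, with strict inequality for $x\ne a$, forces the inner exponent to be $\le \phi(\beta)-\delta'$ uniformly in $j$ for some $\delta'=\delta'(\delta,\eta)>0$: near $x=a$ the $|A_n|$-bound is decisive and yields the deficit $\delta$, while away from $a$ the unrestricted Cram\'er bound already provides a deficit. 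Summing finitely many $j$-terms and letting $\eta\downarrow 0$ concludes the proof.

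\textbf{Main obstacle.} The technical heart is the almost-sure uniform-in-slope Biggins control $\tfrac1n\log|\{v\in T_n:S_n(v)/n\in J\}|\to \log d-\inf_J I$: upper bounds are Markov plus Borel--Cantelli, but the matching lower bounds powering the tube construction in~(a) require a second-moment or tilted-martingale computation using the martingales $M_n^{\ssup\beta}$ and the fact $\E M_n^{\ssup\beta}=1$ for $\beta<\beta_{\rm c}$. The tail control for large $|x|$ in (b) and the diagonal assembly of $\tT$ from the family $\tT^{\eps_k}$ in (a) are the remaining bookkeeping steps.
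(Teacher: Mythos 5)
Your part (b) is essentially sound and is, in substance, the paper's own argument: the paper first replaces $A_n$ by the $|A_n|$ vertices with the largest weight sums (all of slope $\geq \la'(\beta')$ for a suitable $\beta'\in(\beta,\beta_{\rm c})$) and then bins by slope, using the coarse multifractal upper bound $\#\{v\in T_n: \sum_j V(v_j)\geq \alpha n\}\leq e^{n(\log d-\la^*(\alpha)+\eps)}$ together with strict convexity of the Legendre transform to get a uniform strict deficit; your direct slope-binning with the two cardinality bounds and the strict Legendre inequality away from $a=\la'(\beta)$ is the same mechanism, and the count upper bounds you need are indeed just Markov plus Borel--Cantelli.

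Part (a), however, has a genuine gap, in two respects. First, the tube tree $\tT^\eps=\{v: S_k(v)\in[(a-\eps)k,(a+\eps)k]\ \forall k\le |v|\}$ can be empty: for the paper's running example $\mp\{V=1\}=\tfrac14=1-\mp\{V=-1\}$ one has $a=\la'(\beta)\in(-1,1)$, so at $k=1$ the constraint $S_1(v)=V(v_1)\in[a-\eps,a+\eps]$ is unsatisfiable for small $\eps$ and $\tT^\eps$ terminates at the root; any discrete disorder whose support misses a neighbourhood of $a$ has the same problem, so the tube needs an additive allowance (e.g.\ $|S_k-ak|\le \eps k+K$) or must start at a late generation. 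Second, and more seriously, the almost-sure \emph{lower} bound on $|\tT^\eps_n|$ is exactly the hard step and is nowhere proved: Biggins'/Cram\'er-type results give fixed-generation counts $\#\{v\in T_n: S_n(v)\ge an\}$, and the assertion that imposing the constraint at \emph{every} intermediate generation ``costs nothing exponentially'' is precisely what requires a second-moment or spine change-of-measure computation -- which you yourself defer to the ``main obstacle'' paragraph. So the core of (a) is asserted rather than established. (Also, the monotonicity is reversed: $\inf_{|x-a|\le\eps}I=I(a-\eps)\le I(a)$, so $f_\eps(\beta)\downarrow f(\beta)$, meaning each $\tT^\eps$ is slightly too large and your diagonal gluing must prune while preserving both the exact growth rate and the free-energy lower bound -- details that are not routine once the counts are only known one-sidedly.) The paper avoids all of this by exploiting weak disorder directly: $M^{\ssup\beta}>0$ yields the infinite-volume Gibbs measure $\mu^{\ssup\beta}$ on $\partial T$, the ergodic theorem along the spine (Proposition~\ref{limit_sum_V_local_dimension}) gives $\mu^{\ssup\beta}$-a.e.\ ray empirical mean $\la'(\beta)$ and local dimension $f(\beta)$, Egorov's theorem produces a closed set of rays of measure $\ge 1-\eps$ on which these hold uniformly (giving the upper bound on $|\Teps_n|$ from disjointness of the balls $B(v)$ and the lower bound on the energy sum), and Frostman's lemma supplies the matching lower growth bound. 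To repair your route you would either have to carry out the tilted-spine/second-moment argument for the tube counts or import the paper's Gibbs-measure machinery.
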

\pagebreak[2]

\begin{rem}\ \\[-5mm]
\begin{itemize}
\item Loosely speaking, if $0<\beta < \beta_{\rm c}$, vertices in generation~$n$ of the minimal 
supporting subtree typically contribute a summand $\exp(n \beta \lambda'(\beta))$ to the partition 
function $Z_n(\beta)$. As the number of such vertices is of order $\exp(n f(\beta))$, this is in 
line with the equation $f(\beta)+\beta \lambda'(\beta) = \phi(\beta)$.
\item The function $f$ can be interpreted as the \emph{entropy} of the system. 
Its r\^ole as a \emph{multifractal spectrum} is highlighted in~\cite{Mo08}.
\end{itemize} 
\end{rem}
\smallskip

At the critical temperature, the growth rate of the minimal supporting subtree hits zero. This suggests that in the
\emph{strong disorder phase} a subexponential set of polymers may support the free energy.
This is true, and our second main result even shows that a single polymer suffices.
\smallskip

\begin{thm}\label{one_ray_suffices}  If $\beta_{\rm c} < \infty$, then almost surely there
exists a ray $\xi=(\xi_0, \xi_1, \ldots)\in \partial T$ such that for 
any $\beta \geq \beta_{\rm c}$ and sets $A_n\subset T_n$ containing the vertex~$\xi_n$,
$$\lim_{n\to\infty} \frac{1}{n} \log \sum_{v \in A_n} e^{\beta \sum_{j=1}^n V(v_j)} 
= \beta \, \lim_{n\to\infty} \frac{1}{n}  \sum_{j=1}^n V(\xi_n) 
= \phi(\beta) \, .$$
\end{thm}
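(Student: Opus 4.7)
The approach is to reduce the theorem to producing, almost surely, a single ray $\xi \in \partial T$ with $\tfrac{1}{n}\sum_{j=1}^n V(\xi_j) \to \lambda'(\beta_{\rm c})$. Writing $S_n(v)=\sum_{j=1}^n V(v_j)$, the critical identity $f(\beta_{\rm c})=0$ yields $\phi(\beta_{\rm c})=\beta_{\rm c}\lambda'(\beta_{\rm c})$, and hence by \eqref{free_energy} one has $\phi(\beta)=\beta\,\lambda'(\beta_{\rm c})$ for every $\beta\ge\beta_{\rm c}$. Once such a ray has been constructed, the lower bound on $\tfrac{1}{n}\log\sum_{v\in A_n} e^{\beta S_n(v)}$ comes from retaining only the summand at $\xi_n$, while the matching upper bound is immediate from $\sum_{v\in A_n} e^{\beta S_n(v)}\le Z_n(\beta)$ combined with \eqref{free_energy}.

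To bound the sample slope from above along \emph{every} ray, I would first show that $\limsup_{n} \tfrac{1}{n}\max_{v\in T_n} S_n(v) \le \lambda'(\beta_{\rm c})$ almost surely. A union-plus-Chernoff bound at inverse temperature $\beta_{\rm c}$ yields, for any $a>\lambda'(\beta_{\rm c})$,
\[
\p\bigl\{\max_{v\in T_n} S_n(v)\ge na\bigr\}\le d^n\,e^{n\lambda(\beta_{\rm c})-\beta_{\rm c}na}=e^{-\beta_{\rm c}n(a-\lambda'(\beta_{\rm c}))},
\]
using $\log d+\lambda(\beta_{\rm c})=\beta_{\rm c}\lambda'(\beta_{\rm c})$, and Borel--Cantelli concludes.

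The matching lower bound is the main obstacle. I would apply Theorem~\ref{smaller_tree} along an approximating sequence $\beta_k\uparrow\beta_{\rm c}$, producing subtrees $\tT^{(k)}\subset T$ of growth rate $f(\beta_k)\downarrow 0$ that carry the full free energy $\phi(\beta_k)=f(\beta_k)+\beta_k\lambda'(\beta_k)$. A Cram\'er-type decomposition of $\sum_{v\in\tT^{(k)}\cap T_n} e^{\beta_k S_n(v)}$ over the level sets of the empirical slope $S_n(v)/n$, combined with Theorem~\ref{smaller_tree}(b) applied to the portion of the sum whose slope is bounded away from $\lambda'(\beta_k)$, shows that the subtree $\tT^{(k,\eps)}$ obtained by keeping only vertices whose sample slope lies within $\eps$ of $\lambda'(\beta_k)$ still has positive exponential growth. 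Since this restricted tree is infinite and locally finite, K\"onig's lemma produces an infinite ray $\xi^{(k,\eps)}$ with $\limsup_n \bigl|\tfrac{1}{n} S_n(\xi^{(k,\eps)}_n) - \lambda'(\beta_k)\bigr|\le\eps$.

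A diagonal extraction in the compact ultrametric boundary $\partial T$ then selects a single ray $\xi^{\ast}$ agreeing with $\xi^{(k_j,\eps_j)}$ on the first $n_j$ generations, with $\eps_j\to 0$, $\beta_{k_j}\uparrow\beta_{\rm c}$ and $n_j\to\infty$. Continuity of $\lambda'$ at $\beta_{\rm c}$ gives $\liminf_n \tfrac{1}{n}S_n(\xi^{\ast}_n)\ge\lambda'(\beta_{\rm c})$, which combined with the uniform upper bound pins the slope along $\xi^{\ast}$ to $\lambda'(\beta_{\rm c})$. The delicate point is the passage from Theorem~\ref{smaller_tree} (which controls $\sum e^{\beta_k S_n(v)}$) to a statement about the skeleton of $\tT^{(k)}$ organised by slope, so that K\"onig's lemma applies; an alternative route is to run the spine / size-biased tree construction of the paper at each $\beta_k<\beta_{\rm c}$, exploiting that under the tilted measure the spine values are i.i.d.\ with mean $\lambda'(\beta_k)$, and then pass to the diagonal as above.
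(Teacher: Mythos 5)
Your reduction of the theorem to producing a single ray with $\tfrac1n\sum_{j=1}^n V(\xi_j)\to\lambda'(\beta_{\rm c})$ is exactly the paper's route (Proposition~\ref{critical_value_particle}), and your Chernoff/Borel--Cantelli upper bound on $\max_{v\in T_n}\tfrac1n\sum_{j=1}^n V(v_j)$ is correct. The gap lies in the construction of that ray, and as proposed it is fatal. First, the K\"onig's lemma step does not go through as stated: the constraint ``empirical slope within $\eps$ of $\lambda'(\beta_k)$'' is not inherited by ancestors, so the vertices satisfying it do not form a subtree, and knowing from Theorem~\ref{smaller_tree} or Proposition~\ref{coarse_spectrum} that exponentially many vertices of generation $n$ satisfy it does not show that the \emph{hereditarily} constrained tree is infinite; that is a barrier-type survival statement needing its own proof (for fixed $\beta_k<\beta_{\rm c}$ you could instead simply quote Proposition~\ref{limit_sum_V_local_dimension} or Lemma~\ref{consequence_egorov}, which already give rays of slope exactly $\lambda'(\beta_k)$). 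Second, and decisively, the diagonal/compactness extraction cannot upgrade slopes $\lambda'(\beta_k)\uparrow\lambda'(\beta_{\rm c})$ into slope $\lambda'(\beta_{\rm c})$ for a limit ray. If $\xi^{(k_j)}\to\xi^{\ast}$ in $\partial T$, you control $\tfrac1n\sum_{j=1}^n V(\xi^{\ast}_j)$ only at the agreement depths, which are dictated by the convergence and need not exceed the random settling times of the approximating rays; between checkpoints there is no lower bound, so at best you obtain a $\limsup$ statement, whereas the theorem requires $\liminf_n \tfrac1n\sum_{j=1}^n V(\xi^{\ast}_j)\ge\lambda'(\beta_{\rm c})$ (indeed existence of the limit). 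The functional $\xi\mapsto\liminf_n\tfrac1n\sum_{j=1}^n V(\xi_j)$ is not upper semicontinuous on $\partial T$, so compactness does not transfer the property to the limit --- this is precisely the ``occurs at criticality'' subtlety that makes the theorem nontrivial.

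The same difficulty defeats your alternative route (spine construction at each $\beta_k<\beta_{\rm c}$, then diagonalise): the issue is not producing rays at subcritical slopes but at the critical slope, where the naive size-biased change of measure at $\beta_{\rm c}$ is \emph{singular} with respect to $\p$ because $M^{\ssup{\beta_{\rm c}}}=0$ almost surely. The paper therefore works directly at $\beta_{\rm c}$ (Section~\ref{localisation_critical_tree}): it kills paths crossing the line $k\mapsto x+k\lambda'(\beta_{\rm c})$, builds the truncated martingale $W_n^x$ from the renewal function $\specialh$ of the centred walk with increments $V^\bias-\lambda'(\beta_{\rm c})$, changes measure so that the spine performs a walk conditioned to stay below the critical line (Doob $h$-transform), proves $\Q\ll\p$ via a spine decomposition of $\Q^*[W_n\mid\calG]$ together with the fluctuation bounds \eqref{growth_of_RW}, and concludes with Kolmogorov's zero--one law. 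Some substitute for this mechanism --- or a genuinely quantitative concatenation argument with uniform-in-$n$ control of partial sums inside subtrees, in the spirit of Lemma~\ref{consequence_egorov}, with switching times chosen to absorb the random settling behaviour --- is required; as written, your proposal does not establish the existence of the critical ray, and hence not the theorem.
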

\smallskip

Directed polymer models are intimately related to the model of \emph{$\varrho$-percolation}
introduced by Menshikov and Zuev~\cite{MZ93}, which is considered for example 
in~\cite{KS00} and~\cite{CPV08}. Here we discuss an interesting implication of our 
results for this model.
\smallskip
 
To define $\varrho$-percolation,  given an infinite, connected graph 
and a survival parameter $p\in(0,1)$, we declare each edge independently to
be open with probability~$p$, or closed with probability~$1-p$. For $\varrho\in[p,1]$
we say that \emph{$\varrho$-percolation occurs}, if there exists an infinite self-avoiding
path, along which the asymptotic proportion of open edges is at least~$\varrho$.
Our result gives a sharp criterion for the occurrence of $\varrho$-percolation on
regular trees.

\begin{thm}\label{thm_rho_percolation} For $\varrho \in (0,1]$, 
\[ \varrho\mbox{-percolation occurs almost surely} \quad \iff \quad p \geq p_{\rm c} \, , \]
where $p_{\rm c}=\frac 1d$ if $\varrho=1$, and otherwise $p_{\rm c}$ is the  unique solution 
in the interval $(0,\varrho)$ of the equation
\[ p_{\rm c}^\varrho (1-p_{\rm c})^{1-\varrho} d = \varrho^\varrho (1-\varrho)^{1-\varrho} \, . \]
\end{thm}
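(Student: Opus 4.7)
My plan is to recast $\varrho$-percolation as a question about the polymer model of the preceding sections. Attach to each vertex the weight $V(v) = \1\{\text{edge from } v \text{ to its parent is open}\}$, which is iid Bernoulli$(p)$. Then $\varrho$-percolation is the tail event that some ray $\xi \in \partial T$ satisfies $\liminf_{n\to\infty} \tfrac{1}{n}\sum_{j=1}^n V(\xi_j) \geq \varrho$. Here $\la(\beta) = \log(pe^\beta + 1 - p)$, and for $p < \varrho$ the equation $\la'(\beta) = \varrho$ has the unique positive solution $\beta_* = \log\frac{\varrho(1-p)}{p(1-\varrho)}$. A direct substitution into $f(\beta) = \la(\beta) + \log d - \beta\la'(\beta)$ yields
\[
f(\beta_*) = \log d - I(\varrho \,|\, p), \qquad I(\varrho \,|\, p) = \varrho\log\tfrac{\varrho}{p} + (1-\varrho)\log\tfrac{1-\varrho}{1-p},
\]
so the defining equation for $p_{\rm c}$ in the theorem is precisely $f(\beta_*) = 0$, equivalently $\beta_* = \beta_{\rm c}$ at the critical parameter.

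For the upper bound, when $p < p_{\rm c}$ a first-moment Chernoff estimate gives, for small $\eps > 0$,
\[
\me\bigl|\bigl\{v \in T_n : \textstyle\sum_{j=1}^n V(v_j) \geq n(\varrho - \eps)\bigr\}\bigr| \leq d^n e^{-n I(\varrho - \eps \,|\, p)},
\]
which is summable by continuity of $I$ and the assumption $I(\varrho\,|\,p) > \log d$. The Borel--Cantelli lemma then excludes any vertex deep in the tree with proportion at least $\varrho - \eps$, hence any ray with $\liminf \geq \varrho$.

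For the lower bound I distinguish cases. The trivial range $p \geq \varrho$ follows from the strong law along a fixed ray. Otherwise $p_{\rm c} \leq p < \varrho$ and so $\beta_* \leq \beta_{\rm c}$. At $p = p_{\rm c}$, $\beta_* = \beta_{\rm c}$, and Theorem~\ref{one_ray_suffices} provides a ray $\xi$ with $\tfrac{1}{n}\sum_j V(\xi_j) \to \la'(\beta_{\rm c}) = \varrho$. For $p > p_{\rm c}$ with $\beta_{\rm c} < \infty$ the same theorem at $\beta_{\rm c}$ yields a ray with limit $\la'(\beta_{\rm c}) > \la'(\beta_*) = \varrho$. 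The remaining case $\beta_{\rm c} = \infty$ (which for Bernoulli weights forces $p \geq 1/d$) splits further: for $p > 1/d$ the supercritical Galton--Watson tree of open edges survives and yields a fully open infinite ray, while the corner $p = 1/d$ with $\varrho < 1$ is treated by Theorem~\ref{smaller_tree}(a) at some $\beta > \beta_*$ with $\la'(\beta) > \varrho$ and $f(\beta) > 0$, followed by extracting an infinite ray from the positive-growth supporting subtree.

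The main obstacle is this last extraction, since Theorem~\ref{smaller_tree}(a) as stated only supplies growth and partition-function asymptotics of $\tT$, not prefix-sum control along its rays. To close the gap I would either appeal to the specific construction in the proof of Theorem~\ref{smaller_tree}(a), taking $\tT$ to consist of vertices whose prefix sums lie in a $\delta$-window around $k\la'(\beta)$ so that every infinite ray delivered by K\"onig's lemma automatically witnesses $\varrho$-percolation, or run a standalone second-moment argument on the prefix-above-line subtree $\{v : \sum_{j=1}^k V(v_j) \geq k(\varrho + \delta) \text{ for all } k \leq |v|\}$, whose expected size $d^n e^{-nI(\varrho + \delta\,|\,p) + o(n)}$ is exponentially large in the relevant regime. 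Once existence with positive probability has been established in each case, the tail-event nature of $\varrho$-percolation combined with Kolmogorov's 0-1 law promotes it to probability one.
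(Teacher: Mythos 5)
Your case analysis never covers the point $(\varrho,p)=(1,\tfrac1d)$, that is, $1$-percolation at its critical parameter $p_{\rm c}=\tfrac1d$, and this is a genuine gap rather than a routine omission: it is precisely the delicate content of the theorem (the paper's remark emphasises that, unlike classical percolation, $1$-percolation occurs \emph{at} criticality). In that corner $\beta_*$ does not exist ($\la'(\beta)<1$ for every finite $\beta$) and $\beta_{\rm c}=\infty$, so Theorem~\ref{one_ray_suffices}, which you invoke at $p=p_{\rm c}$, does not apply since it is stated only for $\beta_{\rm c}<\infty$; the Galton--Watson tree of open edges is critical and dies out, so there is no fully open ray; Theorem~\ref{smaller_tree}(a) at any fixed $\beta$ only yields rays of density $\la'(\beta)<1$; and your fallback second-moment argument on the tree $\{v:\sum_{j=1}^k V(v_j)\ge k(\varrho+\delta)\ \forall k\le |v|\}$ degenerates at $\varrho=1$, since a barrier of slope $1$ forces all edges open and returns you to the critical GW tree. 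No fixed tilt or fixed linear barrier can produce a ray of density tending to $1$ at $p=\tfrac1d$; the paper does it in Section~\ref{critical_beta_infinite} (Lemma~\ref{critical_ray_V_binary}) by a spinal change of measure with \emph{time-inhomogeneous} success probabilities $p_i\uparrow 1$, and its proof of the percolation theorem goes through Proposition~\ref{critical_value_particle}, which covers both $\beta_{\rm c}<\infty$ and $\beta_{\rm c}=\infty$ uniformly. To close your argument you must either cite that proposition in its $\beta_{\rm c}=\infty$ form or reproduce such a time-dependent tilting.

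Away from this corner your proof is correct and in places takes a different route from the paper. Your upper bound for $p<p_{\rm c}$ is a first-moment Chernoff estimate plus Borel--Cantelli, whereas the paper bounds $\liminf\frac1n\sum_{j=1}^n V_p(\xi_j)$ for every ray by $\phi_p(\beta_{\rm c})/\beta_{\rm c}=\alpha_{\rm c}(p)$ via the free energy formula (Lemma~\ref{fixed_p}); both are valid, and yours is more elementary. The extraction problem you flag at $p=\tfrac1d$, $\varrho<1$ is resolved exactly by your first proposed fix: Lemma~\ref{consequence_egorov}, or more directly Proposition~\ref{limit_sum_V_local_dimension}, already supplies rays along which the prefix averages converge to $\la'(\beta)>\varrho$, so no standalone second-moment argument is needed there. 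Finally, your $0$--$1$ law upgrade is sound, since $\varrho$-percolation is a tail event for the generation-wise independent weights, which is the same device the paper uses.
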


\begin{rem}\ \\[-5mm]
\begin{itemize}
\item The most interesting fact here is that $\varrho$-percolation occurs at criticality,
a phenomenon which we conjecture to hold for $\varrho$-percolation on arbitrary trees.  
\item If $\varrho = 1$, then the critical $p$ value is $\frac{1}{d}$ which is the same as for 
classical percolation on a $d$-ary tree. However, unlike in the classical case, $1$-percolation 
occurs at criticality. Our proofs also show that in the case $p>p_{\rm c}$ the Hausdorff dimension of 
the set of rays surviving $\varrho$-percolation agrees with that of the boundary of the surviving tree
in classical percolation, provided the latter is non-empty.
\end{itemize}
\end{rem}

The remainder of this paper is structured as follows. In Section~\ref{properties_of_f} we review some of the basic 
properties of the function $f$. In Section~\ref{ergodic_theory} we focus on the weak disorder phase and develop some basic ergodic theory of weighted trees, which enables us to construct and explore some properties of the infinite volume Gibbs measures. We also give an estimate on the number of polymers of length~$n$ for which the Hamiltonian is unusually small in terms of a coarse multifractal spectrum. Using this, we prove Theorem~\ref{smaller_tree} in Section~\ref{sect_smaller_tree}. More subtle techniques are required to discuss the critical case and tackle Theorem~\ref{one_ray_suffices}. These are developed in Section~\ref{localisation_critical_tree}. Finally, in Section~\ref{rho_percolation} we translate our results to the model of $\varrho$-percolation and complete the proof of 
Theorem~\ref{thm_rho_percolation}.

\section{Preliminaries}\label{properties_of_f}

In this section, we review some of the properties of the  function $f$. In particular, 
we establish a necessary and sufficient condition for $f$ to have a positive root, see Lemma~\ref{criterion_for_root_of_f}. We also prove a result about the minimum of the 
Hamiltonian taken over the vertices in the $n$th generation, see Lemma~\ref{max_min_H_n}.

We require the Legendre-Fenchel transform $\la^*$ of $\la$ defined as
\[ \la^*(\alpha) = \sup_{\beta \in \R} \{ \alpha \beta - \la(\beta) \} \, , \]
see Figure~\ref{legendre_transform_lambda} for an illustration.

\begin{figure}[htbp]\label{legendre_transform_lambda}
\centering 
\includegraphics[height=6cm]{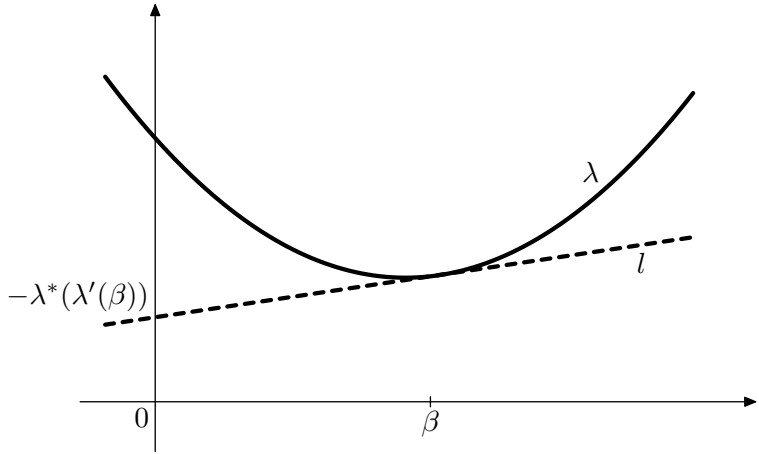} 
\caption{The \emph{Legendre-Fenchel transform} of $\la$. Let $\alpha \in \R$. If $l$ is the unique line of support of $\la$ at $\beta$ with slope $\alpha$, then  $- \la^*(\alpha)$ is equal to the $y$-coordinate of the intersection point of $l$ with the vertical axis.}
\end{figure}

The next result, which can be found in~\cite{Co05}, gives us a necessary and sufficient condition for $f$ to have a positive root. 
\smallskip
\pagebreak[3]

\begin{lemma}\label{criterion_for_root_of_f} $f$ has a 
positive root if and only if 
\begin{itemize}
\item \emph{either} $V$ is unbounded,
\item \emph{or} $w:= \esssup V$ is finite and $\mp\{V=w\}<\frac 1d$.
\end{itemize} 
\end{lemma}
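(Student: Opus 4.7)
The plan is to reduce the problem to a single limit computation and then evaluate it via the Legendre transform of $\lambda$. First I would compute $f(0) = \log d > 0$ and $f'(\beta) = -\beta\lambda''(\beta) < 0$ for $\beta > 0$, using non-degeneracy of $V$ to ensure $\lambda'' > 0$. Hence $f$ is strictly decreasing on $(0,\infty)$, so a positive root exists if and only if $\lim_{\beta\to\infty} f(\beta) < 0$. Using the identity $\beta\lambda'(\beta) - \lambda(\beta) = \lambda^*(\lambda'(\beta))$, which holds because the supremum defining $\lambda^*(\alpha)$ at $\alpha = \lambda'(\beta)$ is attained at this $\beta$, the task reduces to evaluating $L := \lim_{\beta\to\infty}\lambda^*(\lambda'(\beta))$.

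Setting $w := \esssup V \in (-\infty,\infty]$ and $p := \mp\{V=w\}$ (with $p=0$ when $w=\infty$), I would first show $L = \lambda^*(w)$. The upper bound $\lambda^*(\lambda'(\beta)) \leq \beta w - \lambda(\beta) \leq \lambda^*(w)$ is immediate from $\lambda'(\beta)\leq w$, and the matching lower bound comes from lower semicontinuity of the closed convex function $\lambda^*$ together with the monotone convergence $\lambda'(\beta)\nearrow w$ and monotonicity of $\lambda^*$ on $[\me V, w)$. Next, I would identify $\lambda^*(w) = -\log p$ via
\[ \lambda^*(w) \;=\; \lim_{\beta\to\infty}\bigl(\beta w - \lambda(\beta)\bigr) \;=\; -\lim_{\beta\to\infty}\log\me[e^{\beta(V-w)}], \]
and apply dominated convergence to the pointwise decreasing family $e^{\beta(V-w)}\to\1_{\{V=w\}}$, bounded by $1$, to get $\me[e^{\beta(V-w)}]\to p$.

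Combining, $\lim_{\beta\to\infty} f(\beta) = \log(dp)$ when $p>0$, and $=-\infty$ when $p=0$ (covering both the unbounded case $w=\infty$ and the bounded no-atom case). Therefore $f$ has a positive root if and only if $p<1/d$, which is precisely the stated dichotomy. The main technical point I anticipate is the justification of $L = \lambda^*(w)$ at the possibly boundary point $w$; this is a standard consequence of the closedness of the Legendre conjugate $\lambda^*$, but in the unbounded case $w=\infty$ one should instead verify $L=\infty$ directly using the elementary bound $\lambda^*(\lambda'(\beta)) \geq \beta_0\lambda'(\beta) - \lambda(\beta_0) \to \infty$ for any fixed $\beta_0>0$.
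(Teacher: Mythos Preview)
Your proposal is correct and follows essentially the same route as the paper: reduce to the sign of $\lim_{\beta\to\infty} f(\beta)$ via the Legendre identity $f(\beta)=\log d-\lambda^*(\lambda'(\beta))$, then compute $\lambda^*(w)=-\log\mp\{V=w\}$ by passing to the limit in $\me[e^{\beta(V-w)}]$. Your treatment is in fact slightly more careful than the paper's, since you supply the upper bound $\lambda^*(\lambda'(\beta))\le\lambda^*(w)$ explicitly, whereas the paper invokes only lower semicontinuity for the equality $\lim\lambda^*(\lambda'(\beta))=\lambda^*(w)$.
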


\begin{proof} 
Using the Legendre-Fenchel transform, we find that 
\be{definition_f} f(\beta) = \log d + \la(\beta) - \beta \la'(\beta) = \log d - \la^*(\la'(\beta)) \, . \ee
Since $f(0) = \log d$ and $f$ is strictly decreasing and continuous, $f$ has a positive root if and only if $\lim_{\beta \ra \infty} f(\beta) < 0$. It is well-known that
\be{lambda_prime}
\la'(\beta) = \frac{\me \big[V e^{\beta V}\big]}{\me \big[ e^{\beta V}\big]} \ra \esssup V \, .
\ee
Therefore, if $\esssup V = \infty$, then $\la'(\beta) \ra \infty$,
which implies that $f(\beta) \ra - \infty$, so that $f$ has a positive root.

Now suppose that $w:= \esssup V < \infty$. Using $\la'(\beta) \ra w$ and the lower semi-continuity of $\la^*$,
\[ \begin{aligned} \lim_{\beta \ra \infty} \la^*( \la'(\beta)) & = \la^*( w)= \sup_{\beta} \big( \beta w - \log \me\big[ e^{\beta V} \big] \big)
\\ & = - \inf_\beta \, \big(\log ( \mp \{ V = w \}  + \me[ \1{\{V < w\}} \, e^{\beta( V - w)}] ) \big) 
= - \log \mp \{V = w\} \, . 
\end{aligned} \]
So in particular, by~(\ref{definition_f}), $\lim_{\beta \ra \infty} f(\beta) = \log d + \log \mp \{ V = w\}$.
Therefore, if $\mp \{ V = w \} < \frac{1}{d}$, then $\lim_{\beta \ra \infty} f(\beta) < 0$, 
i.e. $f$ has a positive root. Conversely, if $\mp \{ V = w\} \geq \frac{1}{d}$, then  
$\lim_{\beta \ra \infty} f(\beta) \geq 0$ implying that $f(\beta) > 0$ for all $\beta \geq 0$.
\end{proof}

\begin{figure}[htbp]\label{f_of_alpha}
\centering 
\subfigure[$V$ with $\mp \{ V =1\}= 1 - \mp \{ V = 0\}  < 1/d$.]{ 	
		\includegraphics[height=6cm, width=6cm]{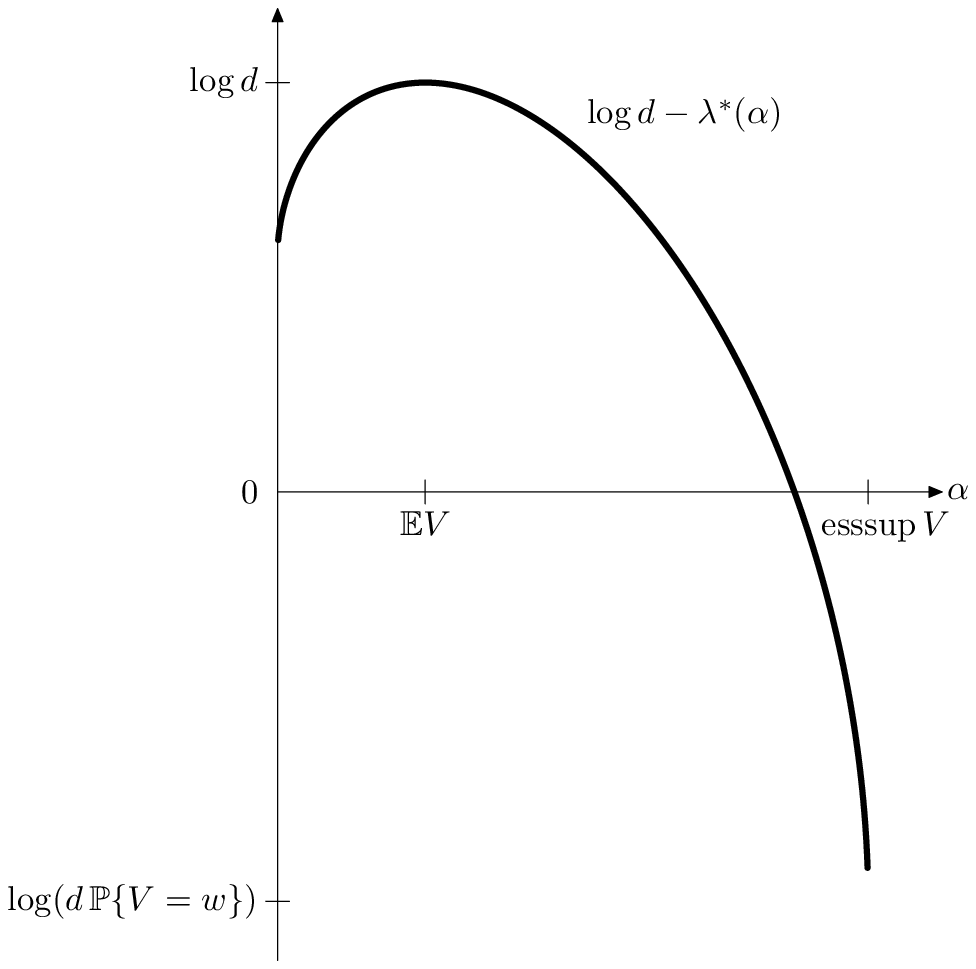}
		\label{f_V_binary_1}  }
\subfigure[$V$ with $\mp \{ V =1\} = 1 - \mp \{ V = 0\} \geq 1/d$.]{ 	
		\includegraphics[height=6cm, width=6cm]{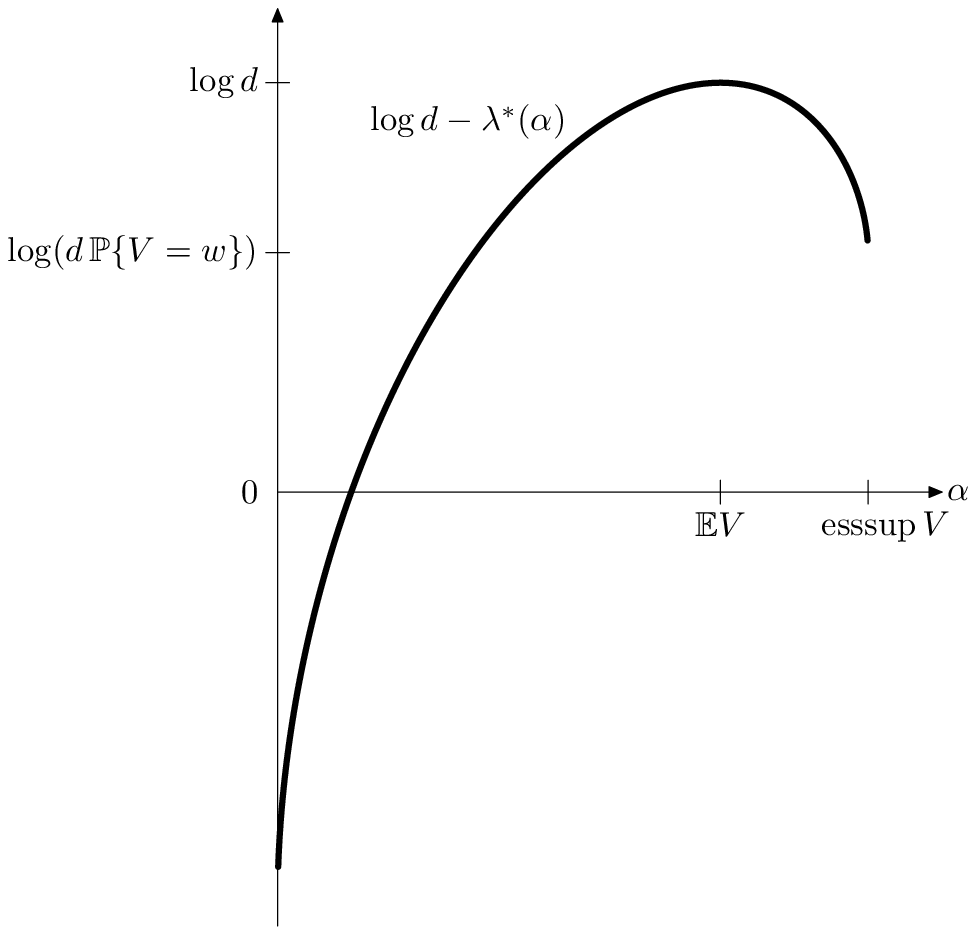}
		\label{f_V_binary_2} }
\subfigure[$V$ uniformly distributed on \mbox{$[0,1]$}.]{ 	
		\includegraphics[height=6cm, width=6cm]{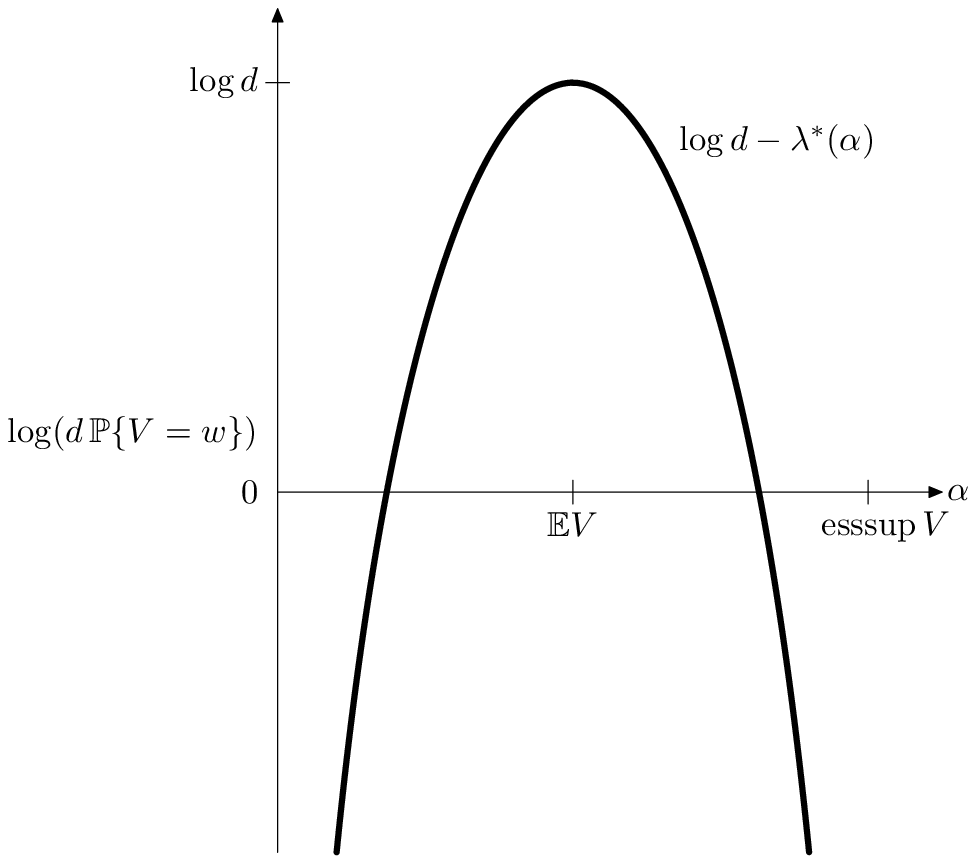}
		\label{f_V_uniform} }
\subfigure[$V$ standard normally distributed.]{ 	
		\includegraphics[height=6cm, width=6cm]{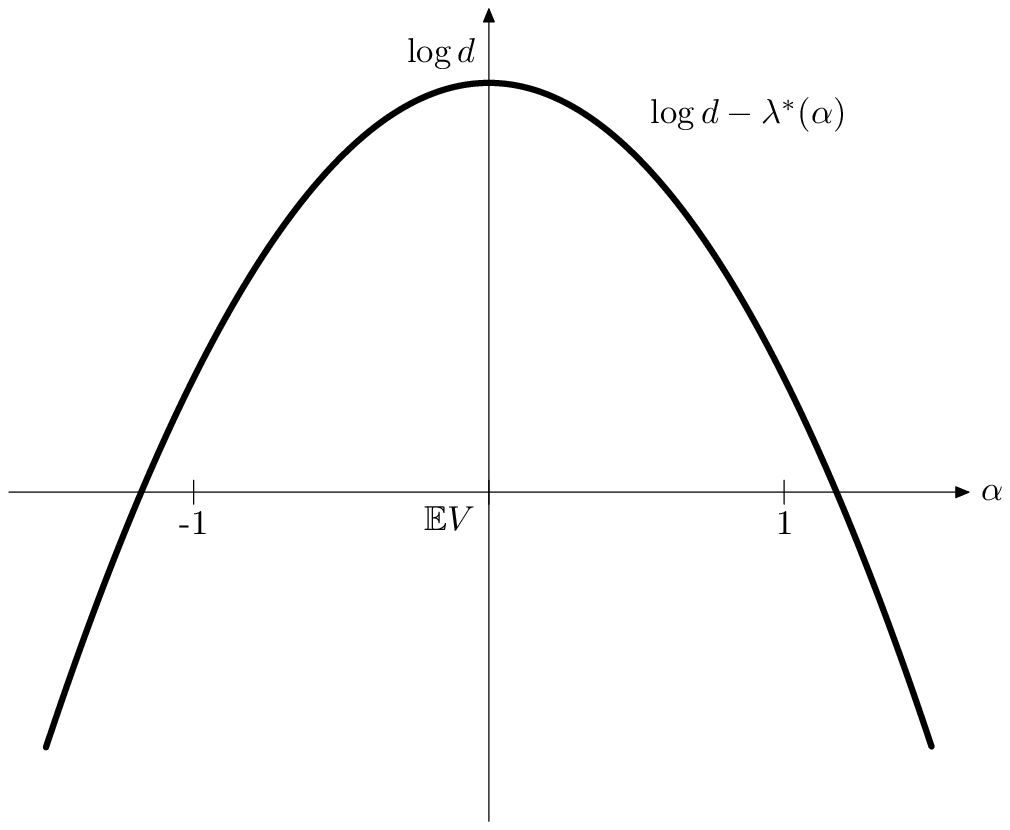}
		\label{f_V_normal} }
\caption{The function $\alpha \mapsto \log d - \la^*(\alpha)$ for four typical cases. Writing $w = \esssup V$, 
Figure (a) shows the
case that $V$ is bounded, but $0 < \mp \{ V = w\}  < \frac{1}{d}$, whereas in (b) $V$ is bounded, but
$\mp\{ V = w \} \geq \frac{1}{d}$, in (c) $V$ is still bounded, but $\mp\{V  = w\} = 0$. Finally, 
in (d), $V$ is unbounded.
}
\end{figure}

By (\ref{lambda_prime}), it makes sense in the case $\beta_{\rm c} = \infty$
to define $\la'(\beta_{\rm c}) = \esssup V$. With this convention, we can prove the
following lemma about the minimum of the Hamiltonian taken over the vertices
in the $n$th generation. 

\begin{lemma}\label{max_min_H_n} We have 
\[ - \lim_{n \ra \infty} \frac{1}{n} \min_{v\in T_n} H_n(v) = 
\lim_{n \ra \infty} \frac{1}{n} \max_{v\in T_n} \sum_{j=1}^n V(v_j) = 
\lim_{\beta\ra\infty}
\frac{\phi(\beta)}{\beta} = \la'(\beta_{\rm c}) \, . \]
\end{lemma}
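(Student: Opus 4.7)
The first equality is immediate from the definition $H_n(v) = -\sum_{j=1}^n V(v_j)$, so only the two remaining equalities require work. For the identity $\lim_{\beta\to\infty}\phi(\beta)/\beta = \la'(\beta_{\rm c})$, I would simply evaluate \eqref{free_energy}: when $\beta_{\rm c} < \infty$, the ratio $\phi(\beta)/\beta = (\la(\beta_{\rm c})+\log d)/\beta_{\rm c}$ is \emph{constant} for $\beta\geq\beta_{\rm c}$, and the defining identity $f(\beta_{\rm c})=0$ rearranges to $\la(\beta_{\rm c})+\log d = \beta_{\rm c}\la'(\beta_{\rm c})$, identifying that constant as $\la'(\beta_{\rm c})$. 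When $\beta_{\rm c} = \infty$ we have $\phi(\beta)/\beta = \la(\beta)/\beta + (\log d)/\beta$, and convexity of $\la$ sends the first summand to $\la'(\infty)=\esssup V$, which matches the stated convention $\la'(\beta_{\rm c}):=\esssup V$.

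For the central equality $\lim_n \tfrac{1}{n}\max_{v\in T_n}\sum_j V(v_j) = \lim_{\beta\to\infty}\phi(\beta)/\beta$ I would use a one-line sandwich. Since $Z_n(\beta)$ is a sum of $d^n$ nonnegative terms whose largest term is $\exp\bigl(\beta\max_v\sum_j V(v_j)\bigr)$, we have
\[
e^{\beta\max_{v\in T_n} \sum_{j=1}^n V(v_j)} \;\leq\; Z_n(\beta) \;\leq\; d^n \, e^{\beta\max_{v\in T_n} \sum_{j=1}^n V(v_j)}.
\]
Taking logarithms, dividing by $n\beta$, and using \eqref{free_energy} to send $n\ra\infty$ gives, for each fixed $\beta>0$,
\[
\frac{\phi(\beta)-\log d}{\beta} \;\leq\; \liminf_{n\ra\infty} \tfrac{1}{n}\max_v \sum_j V(v_j) \;\leq\; \limsup_{n\ra\infty} \tfrac{1}{n}\max_v \sum_j V(v_j) \;\leq\; \frac{\phi(\beta)}{\beta}.
\]
A short computation using \eqref{definition_f} shows that $(\phi(\beta)-\log d)/\beta$ is nondecreasing and $\phi(\beta)/\beta$ is nonincreasing in $\beta$, both converging to $\la'(\beta_{\rm c})$; in the case $\beta_{\rm c}<\infty$ the choice $\beta=\beta_{\rm c}$ even collapses the two bounds to a single equality. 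Either way, letting $\beta\to\infty$ pins both $\liminf$ and $\limsup$ to $\la'(\beta_{\rm c})$.

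The only piece of bookkeeping is the passage $\beta \ra \infty$: since \eqref{free_energy} provides almost sure convergence $\tfrac{1}{n}\log Z_n(\beta) \ra \phi(\beta)$ for each \emph{fixed} $\beta$, I would fix a countable sequence $\beta_k \ua \infty$ and intersect the corresponding null events to obtain a single full-measure set on which the sandwich bound is valid for every $\beta_k$ simultaneously. No deeper obstacle arises — in particular, the argument relies only on \eqref{free_energy} and avoids invoking any of the finer localisation or multifractal results proved later in the paper.
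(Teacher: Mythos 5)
Your argument is correct and essentially identical to the paper's: the same sandwich $e^{\beta\max_v\sum_j V(v_j)}\le Z_n(\beta)\le d^n e^{\beta\max_v\sum_j V(v_j)}$, followed by evaluating $\lim_{\beta\ra\infty}\phi(\beta)/\beta$ from \eqref{free_energy}, where for $\beta_{\rm c}<\infty$ you use $f(\beta_{\rm c})=0$ exactly as the paper does, and for $\beta_{\rm c}=\infty$ you replace the paper's route via $\la^*(\la'(\beta))\ra -\log \mp\{V=w\}$ (from the proof of Lemma~\ref{criterion_for_root_of_f}) by the equivalent convexity fact $\la(\beta)/\beta\ra\esssup V$. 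One aside is false but harmless: at $\beta=\beta_{\rm c}$ the two bounds do \emph{not} collapse (they still differ by $\log d/\beta_{\rm c}$); what actually finishes the proof is, as you say, letting $\beta\ra\infty$ along a countable sequence, which pins both bounds to $\la'(\beta_{\rm c})$.
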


\begin{proof} The first equality follows from the definition of the Hamiltonian.
Clearly, for $\beta > 0$,
\[ \exp \Big\{ \beta \max_{v\in T_n} \sum_{j=1}^n V(v_j) \Big\} \leq Z_n(\beta) \leq d^n 
\exp \Big\{ \beta \max_{v\in T_n} \sum_{j=1}^n V(v_j) \Big\} \, . \]
Hence, it follows that
\be{max_free_energy} \frac{1}{n \beta} \log Z_n(\beta) - \frac{1}{\beta} \log  d  \leq 
\max_{v\in T_n} \frac{\sum_{j=1}^n V(v_j)}{n} \leq \frac{1}{n\beta} \log Z_n(\beta) \, . \ee
If $\beta_{\rm c} < \infty$, then we know from~(\ref{free_energy}) that
\[ \lim_{\beta \ra \infty} \frac{\phi(\beta)}{\beta} = \frac{\la(\beta_{\rm c}) + \log d}{\beta_{\rm c}} = \la'(\beta_{\rm c}) \,  , \]
by definition of $\beta_{\rm c}$. 
Moreover, if $\beta_{\rm c} = \infty$, then again by~(\ref{free_energy}), we know that 
$\phi(\beta) = \la(\beta) + \log d$ for all $\beta>0$. Also, the Legendre-Fenchel transform
of $\la$ satisfies
$\la^*(\la'(\beta)) = \la'(\beta)\beta - \la(\beta)$ and the proof of Lemma~\ref{criterion_for_root_of_f}
shows that $\lim_{\beta \ra \infty} \la^*(\la'(\beta)) = -\log \mp \{ V = w\}$. 
Therefore, we can conclude that
\[ \lim_{\beta \ra \infty} \frac{\phi(\beta)}{\beta} = \lim_{\beta \ra \infty} \frac{\la(\beta)}{\beta}
= \lim_{\beta\ra \infty} \Big( \la'(\beta) - \frac{\la^*(\la'(\beta))}{\beta}\Big) = \esssup V = \la'(\beta_c) \, , \]
by~(\ref{lambda_prime}) and the convention $\la'(\beta_{\rm c}) = \esssup V$.
Hence, in either case, letting first $n \ra \infty$ and then $\beta \ra \infty$ in~(\ref{max_free_energy})
yields the statement of Lemma~\ref{max_min_H_n}.
\end{proof}


\section{Ergodic theory and the multifractal spectrum}\label{ergodic_theory}

In the next two sections we  concentrate on the weak disorder phase, in other words
we  assume that $\beta < \beta_{\rm c}$, so that the martingale limit $M^{\ssup \beta}$ is positive.

\subsection{Ergodic theory on weighted trees}\label{subsection_ergodic_theory}

We  develop the ergodic theory for a tree with attached weights in 
analogy to the  ergodic theory on Galton-Watson trees developed 
by Lyons, Pemantle and Peres in~\cite{LPP95}.
We take advantage of the fact that, in the weak disorder regime, the martingale
convergence can be used to construct the infinite-volume Gibbs measure 
on the boundary of the tree. For this purpose, we extend a finite length polymer $v = (v_0,\ldots, v_n)$ 
to an infinite length polymer  $v^+\in\partial T$ by defining $v_{i+1}$ 
to be the left-most child of $v_i$ for all $i \geq n$. This enables us to interpret the finite volume
Gibbs measures $\mu_n^{\ssup \beta}$ as probability measures on the boundary~$\partial T$ using the convention
$\mu_n^{\ssup \beta}(v^+)=\mu_n^{\ssup \beta}(v)$ for any $v\in T_n$. We will frequently use this identification in the sequel.

For a vertex $v\in T_n$, let $B(v) = \{ \xi \in \partial T \, : \, \xi_n = v \}$
and let $T(v)$ be the subtree consisting of all vertices that have $v$ as an ancestor, with $v$ as a root.
Then we can define the infinite-volume Gibbs measure $\mu^{\ssup \beta}$ by
\[ \mu^{\ssup \beta} (B(v)) := e^{\beta \sum_{j=1}^n V(v_j) - n (\la(\beta)+ \log d )} \frac{M^{\ssup \beta}(v)}{M^{\ssup \beta}}  \, \]
where $M^{\ssup \beta}(v)$ is defined as the almost sure limit of
\[ M^{\ssup \beta}_n(v) 
=   \sum_{w \in T_n(v)} \exp \Big( \beta \sum_{j=1}^n V(w_j) - n (\la(\beta)+\log d )\Big) \, , \]
which exists since $(M^{\ssup \beta}_n(v), n\geq 0)$ and $(M^{\ssup \beta}_n, n\geq 0)$ have
the same law. Then, we see that almost surely for $v$ such that 
$|v| = k$, as $n \ra \infty$,
\[\begin{aligned}  \mu_n^{\ssup \beta}(B(v)) & = \frac{1}{Z_n(\beta)} \, e^{\beta \sum_{j=1}^k V(v_j)} \sum_{w \in T_{n-k}(v)}  
e^{\beta \sum_{j= 1}^{n-k} V(w_j)}  \\
& = e^{\beta \sum_{j=1}^k V(v_j) - k (\la(\beta)+\log d ) } \, \frac{M_{n-k}^{\ssup \beta}(v)}{M_n^{\beta}} \ra  \mu^{\ssup \beta} (B(v))\, , 
\end{aligned} \]
in other words, almost surely, $\mu^{\ssup \beta}_n$ converges weakly to $\mu^{\ssup \beta}$.

The central result of this section is the following proposition.

\begin{prop}\label{limit_sum_V_local_dimension}
If $\beta<\beta_{\rm c}$, for $\p$-almost every disorder and $\mu^{\ssup \beta}$-almost 
every path~$\xi \in \partial T$, 
\be{average_converges_to_alpha}
\lim_{n \ra \infty} \frac{1}{n} \sum_{j=1}^n V(\xi_j) = \la'(\beta) \, , \ee
and
\be{local_dimension_of_mu} \lim_{n \ra \infty} -\frac{1}{n} \log \mu^{\ssup \beta}(B( \xi_n)) = f(\beta)\, . \ee
\end{prop}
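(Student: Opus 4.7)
My plan is to construct a size-biased measure $\mathbb{Q}$ on the space of disordered trees carrying a distinguished spine (the Lyons-Pemantle-Peres construction adapted to this setting), apply the classical strong law and Birkhoff's ergodic theorem under $\mathbb{Q}$, and transfer the conclusions to $\p\otimes\mu^{\ssup\beta}$ by absolute continuity. Define $\mathbb{Q}$ recursively: at the root, draw the weight $V(\rho)$ from the $\beta$-tilted law with density $e^{\beta x-\la(\beta)}$ relative to the law of $V$; mark one of the $d$ children uniformly at random as the spine child and iterate the construction there; let the remaining $d-1$ children root independent copies of the disordered tree under $\p$. A direct calculation yields, for $A\in\calF_n$ and $v\in T_n$,
\[ \mathbb{Q}\bigl(A,\,\xi_n=v\bigr)=d^{-n}\,\me_\p\bigl[\1_A\,e^{\beta\sum_{j=1}^n V(v_j)-n\la(\beta)}\bigr], \]
so that on the tree coordinate $d\mathbb{Q}|_{\calF_n}/d\p=M_n^{\ssup\beta}$ and, conditionally on the tree, $\xi$ is distributed as $\mu^{\ssup\beta}$. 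Thus on pairs $(\omega,\xi)$ the identity $d\mathbb{Q}=M^{\ssup\beta}\,d\p\,d\mu^{\ssup\beta}$ holds. Since $\beta<\beta_{\rm c}$ implies $M^{\ssup\beta}>0$ almost surely and $\me[M^{\ssup\beta}]=1$, the measures $\mathbb{Q}$ and $\p\otimes\mu^{\ssup\beta}$ are mutually absolutely continuous, so any $\mathbb{Q}$-a.s.\ statement translates into the corresponding $\p$-a.s., $\mu^{\ssup\beta}$-a.s.\ statement.

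An immediate consequence of the spine construction is that $V(\xi_1),V(\xi_2),\ldots$ are i.i.d.\ under $\mathbb{Q}$ with the tilted law, whose mean is $\me[Ve^{\beta V}]/\me[e^{\beta V}]=\la'(\beta)$. The classical strong law of large numbers delivers (\ref{average_converges_to_alpha}) $\mathbb{Q}$-almost surely, and hence $\p$-a.s.\ for $\mu^{\ssup\beta}$-a.e.\ $\xi$. For the local dimension (\ref{local_dimension_of_mu}), telescope the cylinder mass along the spine as $\mu^{\ssup\beta}(B(\xi_n))=\prod_{k=1}^n p_k$ with $p_k=\mu^{\ssup\beta}(B(\xi_k))/\mu^{\ssup\beta}(B(\xi_{k-1}))$; the defining formula for $\mu^{\ssup\beta}$ gives
\[ -\log p_k=\log d+\la(\beta)-\beta V(\xi_k)+\log M^{\ssup\beta}(\xi_{k-1})-\log M^{\ssup\beta}(\xi_k). \]
The shift that re-roots the tree at $\xi_1$ and advances the spine by one step preserves $\mathbb{Q}$ by construction and is ergodic (a standard tail-triviality argument for trees with spine), so $(-\log p_k)_{k\geq 1}$ is a stationary ergodic sequence under $\mathbb{Q}$. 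Birkhoff's theorem then gives $-\tfrac1n\log\mu^{\ssup\beta}(B(\xi_n))\to\me_\mathbb{Q}[-\log p_1]$, and since stationarity makes the two $\log M^{\ssup\beta}(\xi_\cdot)$ terms cancel in expectation, what remains is $\me_\mathbb{Q}[-\log p_1]=\log d+\la(\beta)-\beta\la'(\beta)=f(\beta)$, exactly as required.

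The principal obstacle is the spine set-up rather than the ergodic-theoretic analysis: rigorously establishing the tree-marginal density and conditional spine law of $\mathbb{Q}$, the mutual absolute continuity with $\p\otimes\mu^{\ssup\beta}$, and the ergodicity of the spine shift. The only quantitative integrability point is $\me_\p[M^{\ssup\beta}|\log M^{\ssup\beta}|]<\infty$, the Kahane-Peyri\`ere $L\log L$ condition needed to apply Birkhoff to $-\log p_1$, and this is comfortably guaranteed by the blanket assumption that $V$ has all exponential moments.
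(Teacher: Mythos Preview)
Your approach is essentially the paper's: the measure $\mathbb{Q}$ you build via the spine recipe coincides with the measure $\nu(d(\calV,\xi))=\mu_\calV^{\ssup\beta}(d\xi)\,M_\calV^{\ssup\beta}\,\p(d\calV)$ that the paper writes down directly, and both proofs rest on shift-invariance and ergodicity of this measure on spined weighted trees. Two minor technical differences are worth noting. For \eqref{average_converges_to_alpha}, your observation that the spine weights $V(\xi_j)$ are i.i.d.\ with the tilted law under $\mathbb{Q}$ lets you invoke the strong law directly, whereas the paper applies the full ergodic theorem and then computes $\nu[V(\xi_1)]$; your route is marginally more elementary. For the coboundary term $\log M^{\ssup\beta}(\xi_{k-1})-\log M^{\ssup\beta}(\xi_k)$ appearing in \eqref{local_dimension_of_mu}, you need $\me_\p\big[M^{\ssup\beta}|\log M^{\ssup\beta}|\big]<\infty$ in order to apply Birkhoff to $-\log p_1$ and then cancel the two expectations; this does hold (via $L^p$-boundedness of the martingale for some $p>1$ when $\beta<\beta_{\rm c}$, which follows from $f(\beta)>0$ and the exponential-moment assumption), but the paper sidesteps any moment estimate on $M^{\ssup\beta}$ altogether by invoking the one-sided lemma that if $g-\theta g$ is bounded below by an integrable function then it is integrable with integral zero --- here $g=\log M^{\ssup\beta}_\calV$ and the lower bound is $\beta V(\xi_1)-\la(\beta)-\log d$.
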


Let $\st = \{ (\calV, \xi) \, \colon \, \calV=(V(v) \colon v\in T), \, \xi \in \partial T\}$  
be the space of weights attached to the vertices of the $d$-ary tree with marked spine, endowed
with the product topology. For any vertex $w\in T$ we denote by $\calV(w) = (V(v): v \in T(w))$ 
the family of weights  on the tree $T(w)$. There is a canonical shift 
$$\theta \colon \st \ra \st, \quad \theta(\calV, \xi)=\big( \calV(\xi_1), (\xi_1, \xi_2, \ldots) \big).$$ 
Our aim is to show that $\theta$ is a measure-preserving transformation with respect to the
measure
\[ \nu (d (\calV , \xi)) = \mu_\calV^{\ssup \beta}(d \xi) \, M^{\ssup \beta}_\calV \, \p (d\calV) \, , \]
where the subscript $\calV$ indicates the dependence of $\mu^{\ssup \beta}_\calV$ and 
$M_\calV^{\ssup \beta}$ on the underlying disorder.

\begin{lemma}\label{shift_measure_preserving} The shift $\theta$ is $\nu$-preserving. \end{lemma}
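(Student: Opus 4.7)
The plan is to verify the measure-preserving property $\int F \circ \theta\, d\nu = \int F\, d\nu$ for arbitrary bounded measurable $F\colon \st \to \R$ by direct computation, exploiting the branching structure of the infinite-volume Gibbs measure together with the iid structure of the disorder.

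First, I would expand the left-hand side and split the $\mu^{\ssup \beta}_\calV$-integral over $\partial T$ into the $d$ pieces $B(w)$ as $w$ runs over the children of the root. The key ingredient is a branching identity: writing the Hamiltonian along a path from the root through $w$ to some $v \in T(w)$ as $V(w)$ plus the Hamiltonian along the path from $w$ to $v$, the defining cylinder formula for $\mu^{\ssup \beta}$ yields directly that, conditionally on $\xi_1=w$, the shifted ray $(\xi_1,\xi_2,\ldots)$---viewed in $\partial T$ via the canonical identification $T(w) \simeq T$---is distributed as $\mu^{\ssup \beta}_{\calV(w)}$, and that
\[ M^{\ssup \beta}_\calV \, \mu^{\ssup \beta}_\calV(B(w)) = e^{\beta V(w) - (\la(\beta) + \log d)} \, M^{\ssup \beta}_{\calV(w)}. \]

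Substituting turns the left-hand side into
\[ \sum_{w\,:\,|w|=1} \E\!\left[ e^{\beta V(w) - (\la(\beta)+\log d)} \, M^{\ssup \beta}_{\calV(w)} \int F(\calV(w),\eta)\,\mu^{\ssup \beta}_{\calV(w)}(d\eta) \right]. \]
The decisive point is independence: because the Hamiltonian sums in the definitions of $M^{\ssup \beta}_{\calV(w)}$ and $\mu^{\ssup \beta}_{\calV(w)}$ start at $j=1$, neither quantity depends on the weight $V(w)$ at the subtree root, so $V(w)$ is independent of the remainder of the integrand. Factoring the expectation, $\E[e^{\beta V}] = e^{\la(\beta)}$ gives that the $V(w)$-prefactor contributes $1/d$, and the iid structure yields $\calV(w) \stackrel{d}{=} \calV$; each summand therefore equals $\tfrac{1}{d}\int F\, d\nu$, and summing over the $d$ children gives $\int F\, d\nu$.

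The main obstacle is the bookkeeping around the identification $T(w) \simeq T$---regarding $\calV(w)$ as a fresh disorder on $T$ and the shifted ray as a genuine element of $\partial T$---together with the observation that the subtree's root weight enters neither $M^{\ssup \beta}_{\calV(w)}$ nor $\mu^{\ssup \beta}_{\calV(w)}$, a fact that is precisely what unlocks the independence argument. Once this is arranged, the computation collapses via the branching identity and the mean-one property $\E M^{\ssup \beta} = 1$ valid in the weak disorder regime $\beta < \beta_{\rm c}$.
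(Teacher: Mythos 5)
Your proof is correct and follows essentially the same route as the paper: decompose over the children of the root, use the cylinder identity $M^{\ssup \beta}_\calV\,\mu^{\ssup \beta}_\calV(B(w)) = e^{\beta V(w)-\la(\beta)-\log d}\,M^{\ssup \beta}_{\calV(w)}$ together with the fact that the conditional law of the shifted ray is $\mu^{\ssup \beta}_{\calV(\xi_1)}$, then factor out $V(w)$ by independence (giving $\me[e^{\beta V-\la(\beta)}]=1$ and the $1/d$) and use $\calV(w)\stackrel{d}{=}\calV$. The only cosmetic difference is that you integrate a general bounded measurable $F$ while the paper uses indicators of Borel sets, which is equivalent.
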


\begin{proof}
Let $A$ be a Borel set in \st. Then,
\be{la_theta_inv} \begin{aligned} \nu(\theta\inv A) & = \int \1_{\theta\inv A} (\calV, \xi) 
\, \mu^{\ssup \beta}_\calV (d\xi) \, M^{\ssup \beta}_\calV \p(d\calV) \\ & =  
\int \sum_{|v| =1} \1_{\{\xi_1 = v\}}(\xi) \, \1_A (\calV(v), (v, \xi_2, \xi_3, \ldots)) 
\, \mu^{\ssup \beta}_\calV (d\xi) \,  M^{\ssup \beta}_\calV\, \p(d\calV) \, . \end{aligned} \ee
For any vertex $v=(v_0,\ldots,v_n)\in T$ we interpret $\partial T(v)$ as a subset of $\partial T$ by identifying
$(v,\xi_1, \xi_2, \ldots)\in\partial T(v)$ with $(v_0, \ldots, v_n, \xi_1, \xi_2, \xi_3, \ldots)\in \partial T$.
Hence, for $v\in T$ and $U \subset \partial T(v)$, 
\[ \mu^{\ssup \beta}_{\calV(v)}(U)  = \frac{\mu^{\ssup \beta}_\calV (U)}{\mu^{\ssup \beta}_\calV(B(v))} \, . \] 
Hence,  recalling that $\mu^{\ssup \beta}_\calV(B(v)) = e^{\beta V(v) - \la(\beta)-\log d } \frac{M^{\ssup \beta}_{\calV(v)}}{M^{\ssup \beta}_\calV}$, and using independence of the weights,
\[ \begin{aligned} \nu( & \theta\inv  A)  = 
\int \sum_{|v|=1} \int \1_A(\calV(v), (v,\xi_2,\ldots))  \, \mu^{\ssup \beta}_\calV(B(v)) \,
\mu^{\ssup \beta}_{ \calV(v)}(d (v,\xi_2, \ldots))\,  M^{\ssup \beta}_\calV\,  \p(d\calV) \\
& =  \frac{1}{d}\sum_{|v| = 1} \int  e^{\beta V(v)- \la(\beta)}  
\int \1_A (\calV(v), (v, \xi_2, \ldots))  \, \mu^{\ssup \beta}_{\calV(v)}(d(v,\xi_2, \ldots)) \,  \, M^{\ssup \beta}_{\calV(v)}\p(d\calV) \\
& = \me\big[e^{\beta V - \la(\beta)}\big] \, 
\int \1_A(\calV, \xi ) \mu^{\ssup \beta}_\calV(d\xi) \, M^{\ssup \beta}_\calV \, \p (d\calV)  = \nu (A) \, . 
\end{aligned} \]  
\end{proof}

\begin{lemma} The shift $\theta$ is ergodic. \end{lemma}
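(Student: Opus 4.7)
The plan is to identify $(\st, \nu, \theta)$ with a one-sided Bernoulli shift on an iid sequence and then invoke Kolmogorov's zero-one law. First I would coordinatize each $(\calV, \xi) \in \st$ as a sequence of generation blocks $(B_k)_{k \geq 1}$, where
\[ B_k = \big(\tau_k,\; V(\xi_k),\; \{\sigma_k^{(j)}\}_{j \neq \tau_k}\big) \]
records the index $\tau_k \in \{1, \ldots, d\}$ of $\xi_k$ among the children of $\xi_{k-1}$, the spine weight $V(\xi_k)$, and the $d-1$ weighted subtrees hanging off the siblings of $\xi_k$. This encodes $(\calV, \xi)$ measurably (up to the root weight $V(\rho)$, which does not enter any of the relevant quantities), and under this encoding the shift $\theta$ is precisely the one-sided block shift $(B_1, B_2, \ldots) \mapsto (B_2, B_3, \ldots)$.

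The technical core is to establish the following \emph{spine decomposition} of $\nu$: under $\nu$ the blocks $(B_k)_{k \geq 1}$ are iid, and within each $B_k$ the three components are mutually independent, with $\tau_k$ uniform on $\{1, \ldots, d\}$, $V(\xi_k)$ distributed according to the tilted law $e^{\beta x - \la(\beta)}\, \p(V \in dx)$, and each of the $d-1$ sibling subtrees distributed as the original weighted tree under $\p$. This can be obtained by iterating the disintegration used in Lemma~\ref{shift_measure_preserving}: for any path $v = (\rho, v_1, \ldots, v_n)$ one has
\[ \mu_\calV^{\ssup \beta}(B(v_n))\, M_\calV^{\ssup \beta} \;=\; \prod_{k=1}^n \frac{1}{d}\, e^{\beta V(v_k) - \la(\beta)}\, \cdot\, M_{\calV(v_n)}^{\ssup \beta}, \]
so that summing over the $d^n$ possible paths produces iid uniform $\tau_k$ and iid tilted $V(\xi_k)$, while the martingale $M_{\calV(v_n)}^{\ssup \beta}$ (of mean one, since $\beta < \beta_{\rm c}$) together with the independence of the weights on distinct subtrees makes the sibling subtrees iid $\p$-distributed and independent of the spine data.

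With the iid spine decomposition in hand, the conclusion is automatic: any $\theta$-invariant Borel set $A \subset \st$ satisfies $A = \theta^{-n} A$ for every $n \geq 1$, so $A$ is measurable with respect to $\sigma(B_{n+1}, B_{n+2}, \ldots)$ for each $n$ and hence belongs to the tail $\sigma$-algebra of the iid sequence $(B_k)_{k \geq 1}$. By Kolmogorov's zero-one law this tail $\sigma$-algebra is $\nu$-trivial, so $\nu(A) \in \{0, 1\}$, which is exactly the ergodicity of $\theta$. The one genuinely nontrivial step is the iterated disintegration that yields the iid spine decomposition; once this is in place, the Bernoulli-shift reduction and the application of Kolmogorov's 0--1 law are routine.
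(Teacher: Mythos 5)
Your proof is correct, but it takes a genuinely different route from the paper's. The paper appeals to the ergodicity criterion of Proposition~15.5 in \cite{LP05}, which reduces the problem to showing that any set $A$ of disorders satisfying $\sum_{|v|=1,\,\calV(v)\in A}\mu^{\ssup\beta}_\calV(B(v))=\1_A(\calV)$ $\p$-almost surely is trivial; such an $A$ is a tail event for the i.i.d.\ weights, and Kolmogorov's zero-one law finishes the argument. You instead prove the stronger statement that $(\st,\nu,\theta)$ is a one-sided Bernoulli shift: iterating $\mu^{\ssup\beta}_\calV(B(v))\,M^{\ssup\beta}_\calV=\prod_{k=1}^n\frac1d e^{\beta V(v_k)-\la(\beta)}\,M^{\ssup\beta}_{\calV(v_n)}$ along the path and integrating out the mean-one martingale limit $M^{\ssup\beta}_{\calV(v_n)}$ (this is exactly where $\beta<\beta_{\rm c}$ enters, as you note) yields the i.i.d.\ block structure with uniform child index, tilted spine weight and $\p$-distributed sibling subtrees, after which tail triviality of the blocks gives ergodicity (indeed mixing). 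Your route is self-contained --- no external ergodicity criterion --- and identifies $\nu$ explicitly as the size-biased spine measure, very much in the spirit of the change-of-measure constructions the paper uses later in Section~\ref{localisation_critical_tree}; the price is the extra bookkeeping of the block encoding, whereas the paper's reduction is shorter because the criterion of \cite{LP05} does the corresponding work. Two small points to tighten: $\theta^{n}(\calV,\xi)$ involves $V(\xi_n)$, which sits in block $B_n$, so to place $\1_A$ in $\sigma(B_{n+1},B_{n+2},\ldots)$ you should use $A=\theta^{-(n+1)}A$ rather than $A=\theta^{-n}A$ (a harmless off-by-one), and the omission of $V(\rho)$ from the encoding is justified precisely because an invariant indicator factors through $\theta$ and hence never depends on the root weight.
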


\begin{proof}By Proposition 15.5 in~\cite{LP05}, the shift is ergodic with respect to the measure $\nu$ 
if and only if every set $A$ of weights satisfying
\be{theta_invariant_2} \sum_{\substack{ \calV(v) \in A \\ |v| = 1}} \mu^{\ssup \beta}_\calV(B(v)) = \1_A(\calV) \quad \p\mbox{-almost surely } \, \ee
has $\p(A)\in\{0,1\}$. 
Therefore, let $A$ be a set satisfying~(\ref{theta_invariant_2}), then in particular,
\be{I_is_invariant}  \calV \in A \quad \iff \quad
\calV(v) \in A \ \mbox{ for all } v \mbox{ such that } |v| = 1 \, . \ee
By iteration, (\ref{I_is_invariant}) implies that $A$ is a tail event with respect to the i.i.d.~family
of weights. Invoking Kolmogorov's zero-one law, we can deduce that $\ep(A) = 0$ or $1$, as required.
\end{proof}

Since by the previous two lemmas $\theta$ is $\nu$-preserving and ergodic, the pointwise ergodic theorem gives us that 
for $\p$-almost every $\calV$ and $\mu^{\ssup \beta}_\calV$-almost every $\xi \in \partial T$, 

\be{limit_sum_V} \lim_{n \ra \infty} \frac 1n \sum_{j=1}^n V(\xi_j) = \nu[ V(\xi_1)] \, , \ee
where $\nu [\, \cdot \, ]$ denotes the expectation with respect to the measure $\nu$. We find
\[ \begin{aligned} \nu[ V(\xi_1)] & = \int V(\xi_1) \, \mu^{\ssup \beta}_\calV(d \xi) \, M_\calV^{\ssup \beta}
 \, \ep (d \calV) 
= \int \sum_{|v| =1 } V(v) \,\mu^{\ssup \beta}_\calV\{\xi_1 = v\}  \, M_\calV^{\ssup \beta} \, \ep(d\calV)  \\
& = \sum_{|v| = 1} \int  V(v) \, e^{\beta V(v) - \la(\beta) -\log d } \, M_{\calV(v)}^{\ssup \beta} \, \ep(d \calV) \\& 
= \me\big[ V \, e^{\beta V - \la(\beta)} \big] \, \eE\big[ M_{\calV}^{\ssup \beta} \big]
= \frac{ \me[ V e^{\beta V}]}{ \me [e^{\beta V}] } = \la'(\beta) 
\, , \end{aligned} \]
where we have used independence and the fact that $\eE [ M^{\ssup\beta}_\calV ] = 1$. Hence, we have proved
the first part of Proposition~\ref{limit_sum_V_local_dimension}.
Similarly, for the second part, note that
\be{local_dimension_decomposition} \lim_{n \ra \infty} -\frac{1}{n} \log \mu^{\ssup \beta}_\calV (B(\xi_n)) = \log d + \la(\beta) -
\lim_{n \ra \infty} \beta \frac{1}{n}\sum_{j=1}^n V(\xi_j) -  \lim_{n \ra \infty} \frac{1}{n}\log \frac{M^{\ssup \beta}_\calV(\xi_n)}{M^{\ssup \beta}_\calV} \, . \ee
Hence by the first part of Proposition~\ref{limit_sum_V_local_dimension}, it suffices to show that the second limit converges to~$0$. The following lemma from ergodic theory, which can be found for instance in~\cite[Lemma~6.2]{LPP95}, 
allows us to evaluate the last term. 

\begin{lemma}\label{ergodic_trick} If $S$ is a measure-preserving transformation on a probability space, $g$ is finite and measurable, and $g - Sg$ is bounded below by an integrable function, then $g - Sg$ is integrable with integral $0$.
\end{lemma}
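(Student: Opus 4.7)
Write $\phi = g - Sg$. The hypothesis immediately gives $\phi^- \le h^-$, so $\phi^-$ is automatically integrable; the task will be to show that $\phi^+$ is also integrable and that the two parts have equal integral. My plan is to approximate $g$ by bounded truncations $g_n$, exploit the measure-preserving property (which makes $\int (g_n - Sg_n)\, d\mu = 0$ for each $n$ because $g_n \in L^1$), and then pass to the limit using Fatou's lemma and dominated convergence.

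Concretely, for $n\ge 1$ I would set $g_n = (g\wedge n)\vee (-n)$. Each $g_n$ is bounded and therefore integrable, and measure-preservation immediately gives $\int (g_n - Sg_n)^+ \, d\mu = \int (g_n - Sg_n)^-\,d\mu$. Because $g$ is finite everywhere, $g_n \to g$ pointwise, and hence $g_n - Sg_n \to \phi$ pointwise. The crucial ingredient to prove next is the pair of pointwise bounds
\[ (g_n - Sg_n)^+ \le \phi^+ \quad \text{and} \quad (g_n - Sg_n)^- \le \phi^- , \]
established by a short case analysis on where $g(x)$ and $g(Sx)$ lie relative to $\pm n$ (nine cases, each an elementary inequality).

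Given these bounds, Fatou's lemma yields
\[ \int \phi^+ \, d\mu \le \liminf_{n\to\infty} \int (g_n - Sg_n)^+ \, d\mu = \liminf_{n\to\infty} \int (g_n - Sg_n)^- \, d\mu \le \int \phi^- \, d\mu < \infty , \]
so $\phi \in L^1$. Then dominated convergence, applied separately to the positive and negative parts with dominants $\phi^\pm \in L^1$, gives $\int (g_n - Sg_n) \, d\mu \to \int \phi \, d\mu$; since the left-hand side is identically $0$, this forces $\int \phi \, d\mu = 0$.

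The only step I expect to require any real care is the case analysis behind the pointwise bounds $(g_n - Sg_n)^\pm \le \phi^\pm$: it is elementary but one does have to check the nine regions determined by whether each of $g$ and $Sg$ is truncated from above, from below, or not at all, and verify the correct sign in each. Once those bounds are in place, everything else is a textbook application of Fatou's lemma and dominated convergence, and no ergodicity or further structure of $S$ is needed.
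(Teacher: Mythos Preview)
Your argument is correct. The paper does not actually prove this lemma: it is quoted verbatim from \cite{LPP95} (their Lemma~6.2) with only a citation, so there is no ``paper's own proof'' to compare against. Your truncation-plus-Fatou approach is a clean, self-contained verification and is one of the standard proofs of this fact.

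One small simplification: the nine-case analysis you anticipate for the pointwise bounds $(g_n - Sg_n)^\pm \le \phi^\pm$ is unnecessary. The truncation map $\tau_n(x) = (x\wedge n)\vee(-n)$ is non-decreasing and $1$-Lipschitz, so $\tau_n(a)-\tau_n(b)$ always has the same sign as $a-b$ and satisfies $|\tau_n(a)-\tau_n(b)|\le|a-b|$. Applying this with $a=g$, $b=Sg$ gives both bounds at once.
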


Looking at $g(\calV, \xi) = \log M^{\ssup \beta}_\calV$ and using that 
$M^{\ssup \beta}_{\calV(\xi_1)} = M^{\ssup \beta}_{\calV}(\xi_1)$, we obtain
\[ \begin{aligned} g - \theta g & = \log  M^{\ssup \beta}_\calV - \log M^{\ssup \beta}_\calV(\xi_1) = -\log \mu_\calV^{\ssup \beta}(B(\xi_1)) + \beta V(\xi_1) - \la(\beta)-\log d  \\ & \geq  \beta V(\xi_1) - \la(\beta)-\log d  \, , \end{aligned} \]
where the latter is integrable. Hence by the ergodic theorem and Lemma~\ref{ergodic_trick}, for $\ep$-almost every disorder $\calV$ and 
$\mu^{\ssup \beta}_\calV$-almost every $\xi$,
\[ \lim_{n \ra \infty} \frac{1}{n} \log \frac{M^{\ssup \beta}_\calV(\xi_n)}{M^{\ssup \beta}_\calV}
= \lim_{n \ra \infty} \frac{-1}{n} \sum_{j=1}^n \log \frac{M^{\ssup \beta}_\calV(\xi_{j-1})}{M^{\ssup \beta}_\calV(\xi_{j})}
= \nu[ \log  M^{\ssup \beta}_\calV - \log M^{\ssup \beta}_{\calV}(\xi_1) ] = 0 \, . \]
Therefore~(\ref{local_dimension_decomposition}) together with the first part implies the second part of 
Proposition~\ref{limit_sum_V_local_dimension}.

\subsection{A coarse multifractal spectrum}

We  use the ergodic theory developed in the previous section to prove the following
\emph{coarse multifractal spectrum}. 

\begin{prop}\label{coarse_spectrum} For all $\alpha \geq \me V$ with $\la^*(\alpha)< \log d$, almost surely,
\[ \lim_{n \ra \infty} \frac{1}{n} \log \# \Big\{ v \in T_n \, : \, \sum_{j=1}^n V(v_j) \geq \alpha n \Big\}  = 
\log d - \la^*(\alpha)\, . \]
\end{prop}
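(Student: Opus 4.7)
The plan is to establish matching upper and lower bounds for
\[ N_n := \# \Big\{ v \in T_n \, : \, \sum_{j=1}^n V(v_j) \geq \alpha n \Big\}. \]
The upper bound is a first-moment estimate: since for each $v \in T_n$ the sum $\sum_{j=1}^n V(v_j)$ is distributed as a sum of $n$ i.i.d.\ copies of $V$, the standard Cram\'er--Chernoff bound $\p\{\sum_{j=1}^n V(v_j) \geq \alpha n\} \leq e^{-n \la^*(\alpha)}$ (valid for $\alpha \geq \me V$) gives $\me[N_n] \leq e^{n(\log d - \la^*(\alpha))}$. Markov's inequality with any exponent slightly above $\log d - \la^*(\alpha)$ combined with Borel--Cantelli then yields
\[ \limsup_{n \to \infty} \tfrac{1}{n} \log N_n \leq \log d - \la^*(\alpha) \quad \mbox{almost surely.} \]

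For the lower bound I would apply the ergodic theory of Subsection~\ref{subsection_ergodic_theory}. Fix $\eta > 0$, use continuity of $\la^*$ on the interior of its effective domain to choose $\alpha' > \alpha$ with $\la^*(\alpha') < \log d$ and $\la^*(\alpha') \leq \la^*(\alpha) + \eta$, and let $\beta \geq 0$ be the unique value with $\la'(\beta) = \alpha'$, which exists by strict convexity of $\la$. Identity~\eqref{definition_f} gives $f(\beta) = \log d - \la^*(\alpha') > 0$, so $\beta < \beta_{\rm c}$ and Proposition~\ref{limit_sum_V_local_dimension} applies: for $\p$-almost every disorder and $\mu^{\ssup \beta}$-almost every ray $\xi$, both $\tfrac{1}{n}\sum_{j=1}^n V(\xi_j) \to \alpha'$ and $-\tfrac{1}{n} \log \mu^{\ssup \beta}(B(\xi_n)) \to f(\beta)$. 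By Egorov's theorem there exist $E \subset \partial T$ with $\mu^{\ssup \beta}(E) \geq \tfrac{1}{2}$ and an index $n_0$ such that for all $\xi \in E$ and $n \geq n_0$,
\[ \sum_{j=1}^n V(\xi_j) \geq \alpha n \quad \mbox{and} \quad \mu^{\ssup \beta}(B(\xi_n)) \leq e^{-n(f(\beta) - \eta)}. \]

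Since $\{B(v) : v \in T_n\}$ partitions $\partial T$, the vertices $v \in T_n$ with $B(v) \cap E \neq \emptyset$ are all counted by $N_n$ and have $\mu^{\ssup \beta}$-mass at most $e^{-n(f(\beta)-\eta)}$, so covering $E$ yields $\tfrac{1}{2} \leq N_n\, e^{-n(f(\beta) - \eta)}$ for $n \geq n_0$. Hence $\liminf_n \tfrac{1}{n} \log N_n \geq f(\beta) - \eta = \log d - \la^*(\alpha') - \eta$, and letting $\eta \downarrow 0$ and $\alpha' \downarrow \alpha$ gives the matching lower bound.

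The main technical point to watch is that the exceptional set provided by Proposition~\ref{limit_sum_V_local_dimension} depends on $\beta$; I would therefore run the lower-bound argument simultaneously along a fixed countable sequence $\alpha'_k \downarrow \alpha$, so that all almost-sure statements hold on a single full-measure event, and then use monotonicity of $N_n$ in $\alpha$ to transfer the conclusion uniformly to every $\alpha$ in the range covered by the proposition. This interchange of quantifiers with the exceptional null sets is the only subtlety that needs careful bookkeeping.
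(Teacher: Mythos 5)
Your proposal is correct, and its lower bound is essentially the paper's own argument: both pick $\alpha'=\la'(\beta)>\alpha$ with $\la^*(\alpha')<\log d$ (hence $f(\beta)>0$ and $\beta<\beta_{\rm c}$), invoke Proposition~\ref{limit_sum_V_local_dimension}, and then cover a set of positive $\mu^{\ssup \beta}$-measure by the generation-$n$ balls $B(v)$, so that the number of contributing vertices must be at least $e^{n(f(\beta)-\eta)}$; the paper runs the covering over all generations $n\ge k$ and contradicts $\mu^{\ssup\beta}(E)=1$, whereas you use Egorov's theorem to make the convergence uniform on a set of measure at least $\tfrac12$ and argue generation by generation --- a device the paper itself employs later in Lemma~\ref{consequence_egorov}, so this is only a cosmetic difference. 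Where you genuinely diverge is the upper bound: the paper inserts the indicator into the partition function and uses the explicit free energy $\phi(\beta)=\la(\beta)+\log d$ for $\beta<\beta_{\rm c}$ from \eqref{free_energy} together with Legendre duality, while you use the first-moment bound $\me[N_n]\le e^{n(\log d-\la^*(\alpha))}$ with Markov's inequality and Borel--Cantelli; your route is more elementary and self-contained (it does not rely on the free energy formula), while the paper's is computation-free once \eqref{free_energy} is available and exhibits the constant directly as $\phi(\beta)-\alpha\beta$. Your remark about running the argument along a countable sequence $\alpha'_k\downarrow\alpha$ is the right way to handle the $\beta$-dependent null sets. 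One caveat, which you share with the paper's own proof rather than introduce: choosing $\alpha'>\alpha$ (the paper's $\alpha+\eps$) with $\la^*(\alpha')<\log d$ is impossible when $\alpha$ sits at the right endpoint of the effective domain of $\la^*$, e.g.\ $\alpha=\esssup V$ carrying an atom of mass greater than $\frac1d$; this boundary case lies outside both arguments and is not needed for the application in the proof of Theorem~\ref{smaller_tree}(b).
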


\begin{proof} 
First note that the proof of Lemma~\ref{criterion_for_root_of_f} shows 
that $\alpha \beta - \la(\beta)$ is maximised at $\beta \in [0,\beta_{\rm c})$ 
such that $\alpha = \la'(\beta)$. For the \emph{upper bound} consider
\[ \begin{aligned} \phi(\beta) & = \lim_{n \ra \infty}\frac{1}{n} \log \sum_{v \in T_n} e^{\beta \sum_{j=1}^n V(v_j) } 
\geq \limsup_{n \ra \infty} \frac{1}{n} \log \sum_{v \in T_n} e^{\beta \sum_{j=1}^n V(v_j)} \1_{ \{\sum_{j=1}^n V(v_j) \geq \alpha n \}} \\
& \geq \alpha \beta + \limsup_{n \ra \infty} \frac{1}{n} \log \# \Big\{ v \in T_n \, : \, \sum_{j=1}^n V(v_j) \geq \alpha n \Big\}
\, . \end{aligned} \]
Now, by the expression for the free energy in~(\ref{free_energy}), we know that $\phi(\beta) = \la(\beta) + \log d $ for $\beta < \beta_{\rm c}$. Therefore, rearranging the previous display yields 
\[ \begin{aligned} \limsup_{n \ra \infty}  \frac{1}{n} \log \# \Big\{ & v \in T_n \, : \, \sum_{j=1}^n V(v_j) \geq \alpha n \Big\} \leq \phi(\beta) - \alpha \beta \\ & = \log d  + \la(\beta) - \la'(\beta) \beta = \log d  - \la^*(\alpha) \,,\end{aligned} \]
where we used the definition of $\beta$ as the maximizer of the Legendre-Fenchel transform.

For the \emph{lower bound}, recall that $\lambda^*$ is continuous on its domain
and consider $\eps >0$ small enough such that $\log  d  - \la^*(\alpha+\eps) >0$. 
In particular, we can find $0< \beta< \beta_{\rm c}$ such that $\alpha + \eps= \la'(\beta)$. 
Then, consider the set
\[ E = \Big\{ \xi \in \partial T \, : \, \lim_{n \ra \infty} \frac{1}{n} \sum_{j=1}^n V(\xi_j) = \la'(\beta) \,, 
\lim_{n\ra \infty} -\frac{1}{n} \log\mu^{\ssup\beta}(B(\xi_n)) = f(\beta) \Big\} \, . \]
By Proposition~\ref{limit_sum_V_local_dimension}, $\mu^{\ssup\beta}(E)=1$. Moreover, recalling that 
$\la'(\beta) - \eps= \alpha $, for any $k \in\N$, the set $E$ is covered by the collection
\[ \bigcup_{n = k}^\infty \bigcup_{|v| =n} \{ v \, : \, \sum_{j=1}^n V(v_j) \geq \alpha n, \ \mu^{\ssup\beta}(B(v)) \leq e^{-n (f(\beta) - \eps)} \} \, . \]
Hence, if we 
write $q = \liminf_{n \ra \infty} \frac{1}{n} \log \#\{ v \in T_n \, : \, \sum_{j=1}^n V(v_j) \geq \alpha n\}$, we obtain 
for $k$ sufficiently large
\[ \begin{aligned} 1 = \mu^{\ssup\beta} (E) & \leq \sum_{n=k}^\infty \sum_{|v|=n} \1_{\{ \sum_{j=1}^n V(v_j) \geq \alpha n \}}
\1_{\{ \mu^{\ssup\beta}(B(v)) \leq e^{-n (f(\beta) - \eps)} \}} \mu^{\ssup\beta}(B(v)) \\
& \leq \sum_{n = k}^\infty \#\{ v \in T_n \, : \, \sum_{j=1}^n V(v_j) \geq \alpha n\} \, e^{-n (f(\beta) - \eps)} 
\leq \sum_{n =k}^\infty e^{n ( q - f(\beta) + 2 \eps )} \, . \end{aligned} \]
Therefore, if $q - f(\beta) + 2 \eps < 0$, the sum on the right hand side converges, so by taking $k$ 
large enough we could make the right hand side $< 1$ contradicting $\mu^{\ssup\beta}(E) = 1$.
Thus, we conclude that $q - f(\beta) + 2 \eps \geq 0$.

Finally, we recall that $f(\beta) = \log d  + \la(\beta) - \beta \la'(\beta) = \log d  - \la^*(\alpha +\eps)$
so that we have shown that
\[ q = \liminf_{n \ra \infty} \frac{1}{n} \log \# \Big\{ v \in T_n \, : \, \sum_{j=1}^n V(v_j) \geq \alpha n \Big\}
\geq f(\beta) - 2 \eps = \log d  - \la^*(\alpha + \eps) - 2 \eps \, . \]
Therefore, recalling that $\la^*$ is continuous, we obtain the required lower bound
by letting $\eps \da 0$.
\end{proof}


\section{Localisation in the weak disorder phase}\label{sect_smaller_tree}

In this section, we  prove Theorem~\ref{smaller_tree} using the theory developed
in Section~\ref{ergodic_theory}.

\begin{lemma}\label{liminf_generation_size}
Suppose $\Ttilde \subset T$ is any subtree satisfying $\mu^{\ssup \beta}(\partial \Ttilde) > 0$, then 
$$\liminf_{n \ra \infty} \tfrac{1}{n} \log |\Ttilde_n| \geq f(\beta).$$
\end{lemma}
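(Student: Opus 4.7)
The plan is to leverage Proposition~\ref{limit_sum_V_local_dimension}, specifically the local dimension statement \eqref{local_dimension_of_mu}, namely that $\mu^{\ssup\beta}$-a.e.\ ray $\xi$ satisfies $\mu^{\ssup\beta}(B(\xi_n)) = e^{-nf(\beta)+o(n)}$. The intuition is that if $\partial \tT$ carries positive Gibbs mass and each vertex in $\tT_n$ contributes a ball of mass at most $\approx e^{-nf(\beta)}$, then a counting bound forces $|\tT_n|\gtrsim e^{nf(\beta)}$.

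More concretely, I would first fix $\eps > 0$ and apply Egorov's theorem to the almost sure convergence in \eqref{local_dimension_of_mu} on the set $\partial \tT$, which by hypothesis has $\mu^{\ssup\beta}(\partial\tT) > 0$. This gives a measurable subset $E \subset \partial \tT$ with $\mu^{\ssup\beta}(E) > 0$ and an integer $N = N(\eps)$ such that
\[ \mu^{\ssup\beta}\bigl(B(\xi_n)\bigr) \leq e^{-n(f(\beta) - \eps)} \qquad \text{for every } \xi \in E \text{ and every } n \geq N. \]
Since every $\xi \in E \subset \partial\tT$ satisfies $\xi_n \in \tT_n$, the sets $\bigl\{ B(v) : v \in \tT_n,\ B(v)\cap E \neq \emptyset \bigr\}$ form a disjoint cover of $E$ for each $n$, and the number of such $v$ is at most $|\tT_n|$.

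Therefore, for all $n \geq N$,
\[ 0 < \mu^{\ssup\beta}(E) \leq \sum_{\substack{v \in \tT_n \\ B(v)\cap E \neq \emptyset}} \mu^{\ssup\beta}(B(v)) \leq |\tT_n|\, e^{-n(f(\beta)-\eps)}. \]
Taking logarithms, dividing by $n$ and passing to the $\liminf$ yields
\[ \liminf_{n\to\infty} \tfrac{1}{n} \log |\tT_n| \geq f(\beta) - \eps, \]
and since $\eps > 0$ was arbitrary, the desired bound follows.

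There is no real obstacle here; the only point requiring some care is the verification that Egorov's theorem can indeed be applied on $\partial \tT$, which is immediate since $\partial \tT$ is measurable in $\partial T$ and has positive $\mu^{\ssup\beta}$-mass by assumption. Everything else is a deterministic counting argument relying on the disjointness of the balls $B(v)$ for $v$ in a fixed generation.
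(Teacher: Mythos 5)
Your proof is correct, but it takes a different route from the paper. The paper's proof is a two-line appeal to fractal machinery: by Frostman's lemma (the mass distribution principle) together with \eqref{local_dimension_of_mu}, the Hausdorff dimension of $\partial\tT$ is at least $f(\beta)$, and then one uses the fact that the Hausdorff dimension of the boundary of a tree equals the logarithm of its branching number, which is bounded above by the lower exponential growth rate $\liminf_n \frac1n\log|\tT_n|$ (citing Lyons--Peres). You instead bypass Hausdorff dimension entirely: Egorov's theorem upgrades the $\mu^{\ssup\beta}$-a.e.\ convergence in \eqref{local_dimension_of_mu} to a uniform bound $\mu^{\ssup\beta}(B(\xi_n))\le e^{-n(f(\beta)-\eps)}$ on a positive-mass subset $E\subset\partial\tT$, and then the disjointness of the generation-$n$ balls through $\tT_n$ gives $0<\mu^{\ssup\beta}(E)\le|\tT_n|e^{-n(f(\beta)-\eps)}$, i.e.\ exactly the counting bound needed. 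This is in effect a hands-on proof of the relevant instance of the mass distribution principle at the level of generation counts, and it is consistent with tools the paper already uses (the same Egorov device appears in Lemma~\ref{consequence_egorov}, and the covering-plus-counting step mirrors the lower bound in Proposition~\ref{coarse_spectrum}). What the paper's route buys is brevity and a conceptual statement about dimension; what yours buys is self-containedness (no Frostman, no branching-number/growth-rate comparison) at the cost of slightly more explicit bookkeeping. The only points needing care — measurability of $\partial\tT$ (it is closed, hence Borel), applying Egorov to the finite measure $\mu^{\ssup\beta}$ restricted to $\partial\tT$ with an exceptional set of mass strictly less than $\mu^{\ssup\beta}(\partial\tT)$, and noting that the almost-sure set of disorders does not depend on $\tT$ since the argument is pathwise — you either address or they are immediate.
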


\begin{proof}
Using Frostman's lemma, see e.g.~Proposition 2.3 in~\cite{Fa97}, 
in combination with~\eqref{local_dimension_of_mu} we infer that the
Hausdorff dimension of $\partial \Ttilde$ must be at least~$f(\beta)$. The Hausdorff
dimension of the boundary of a tree is the logarithm of its branching rate, which is bounded from above
by the lower growth rate, see e.g.~\cite{LP05}.
\end{proof}

The next lemma enables us to choose suitable trees for Theorem~\ref{smaller_tree}(a).

\begin{lemma}\label{consequence_egorov}
Almost surely, for any $\eps>0$ there exists a subtree $\Teps \subset T$ with
$\mu^{\ssup \beta}(\partial \Teps) \geq 1 - \eps$, and a sequence $\delta_n \da 0$
such that, for every $\xi\in\partial \Teps$ and $n\ge 1$,
$$\frac 1n \sum_{j=1}^n V(\xi_j) \geq \la'(\beta) - \delta_n \qquad \mbox{ and } \qquad 
\mu^{\ssup \beta}\big( B(\xi_n) \big) \geq e^{-n(f(\beta) + \delta_n)} \, . $$
\end{lemma}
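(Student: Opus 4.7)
My plan is to obtain the conclusion as an application of Egorov's theorem to the two pointwise convergences supplied by Proposition~\ref{limit_sum_V_local_dimension}, combined with the crucial observation that both quantities to be controlled are functions of the vertex $\xi_n$ alone, not of the continuation of the ray beyond generation~$n$. Once that is observed, the uniform convergence set coming from Egorov can be enlarged to the boundary of a suitable subtree without losing the required one-sided bounds.

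\textbf{Extracting a uniform convergence set.} Fix a disorder $\calV$ in the $\p$-probability-one set on which Proposition~\ref{limit_sum_V_local_dimension} holds. On the probability space $(\partial T, \mu^{\ssup \beta})$, the measurable sequences
$$F_n(\xi) := \tfrac{1}{n}\sum_{j=1}^n V(\xi_j), \qquad G_n(\xi) := -\tfrac{1}{n}\log \mu^{\ssup \beta}(B(\xi_n))$$
converge $\mu^{\ssup\beta}$-almost surely to $\la'(\beta)$ and $f(\beta)$ respectively. Applying Egorov's theorem to each sequence and intersecting the two resulting sets produces a measurable $A_\eps \subset \partial T$ with $\mu^{\ssup\beta}(A_\eps) \ge 1-\eps$ on which both convergences are uniform, so that some sequence $\delta_n \da 0$ satisfies
$$\sup_{\xi \in A_\eps} |F_n(\xi) - \la'(\beta)| \le \delta_n \quad \text{and} \quad \sup_{\xi \in A_\eps} |G_n(\xi) - f(\beta)| \le \delta_n.$$

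\textbf{Promoting $A_\eps$ to the boundary of a subtree.} Define
$$\Teps := \bigl\{ v \in T \,:\, B(v) \cap A_\eps \neq \emptyset \bigr\}.$$
This set is automatically closed under taking ancestors, and so is a genuine subtree of $T$. Any $\xi \in A_\eps$ satisfies $\xi \in B(\xi_n) \cap A_\eps$ for every $n$, hence $\xi_n \in \Teps$ for all $n$, giving $A_\eps \subset \partial \Teps$ and therefore $\mu^{\ssup\beta}(\partial \Teps) \ge 1-\eps$. The decisive (and easy) point is that both $F_n(\xi)$ and $\mu^{\ssup \beta}(B(\xi_n))$ depend only on the vertex $\xi_n$. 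Consequently, given any $\xi \in \partial \Teps$ and any $n \ge 1$, we may select $\xi^{(n)} \in A_\eps$ with $\xi^{(n)}_n = \xi_n$ (which exists by definition of $\Teps$) to conclude
$$F_n(\xi) = F_n(\xi^{(n)}) \ge \la'(\beta) - \delta_n, \qquad \mu^{\ssup\beta}\bigl(B(\xi_n)\bigr) = \mu^{\ssup\beta}\bigl(B(\xi^{(n)}_n)\bigr) \ge e^{-n(f(\beta)+\delta_n)}.$$

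\textbf{Main obstacle.} None of the individual steps is difficult, but the one requiring a moment's care is the transfer of bounds from $A_\eps$ to $\partial \Teps$: taking the smallest subtree whose boundary contains $A_\eps$ will in general strictly enlarge $A_\eps$, so we rely essentially on the fact that the two quantities of interest factor through $\xi_n$, which guarantees that the one-sided bounds survive this enlargement. Choosing a common $\delta_n$ controlling both Egorov convergences poses no problem, since we may replace the two error sequences by their maximum.
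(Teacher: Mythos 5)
Your proof is correct, and its skeleton is the same as the paper's: fix a disorder in the almost-sure set of Proposition~\ref{limit_sum_V_local_dimension}, apply Egorov's theorem on $(\partial T,\mu^{\ssup\beta})$ to get $A_\eps$ with $\mu^{\ssup\beta}(A_\eps)\ge 1-\eps$ and uniform convergence (hence a common $\delta_n\da 0$), and let $\Teps$ be the tree spanned by the rays meeting $A_\eps$. The one step where you diverge is the passage from $A_\eps$ to $\partial\Teps$: the paper first uses inner regularity of $\mu^{\ssup\beta}$ (on the compact metric space $\partial T$) to take $A_\eps$ closed, so that $A_\eps$ is compact and one gets the exact identity $\partial\Teps=A_\eps$, after which nothing needs to be transferred. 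You instead allow $\partial\Teps$ to be strictly larger than $A_\eps$ and exploit that $\frac1n\sum_{j=1}^n V(\xi_j)$ and $\mu^{\ssup\beta}(B(\xi_n))$ are cylinder functions of $\xi_n$ alone, so the level-$n$ bounds propagate from a representative $\xi^{\ssup n}\in A_\eps\cap B(\xi_n)$ to every $\xi\in\partial\Teps$. Both routes are valid; yours is slightly more elementary in that it dispenses with the regularity/compactness appeal, while the paper's closed choice of $A_\eps$ gives the cleaner statement $\partial\Teps=A_\eps$ (which is also what makes Corollary~\ref{infGibbs} immediate), so either works for the later applications.
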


\begin{proof}
Since $\partial T$ is a complete separable metric space and $\mu^{\ssup \beta}$ is a finite measure, we know that $\mu^{\ssup \beta}$ is regular, see~\cite{Sc05}. By Egorov's theorem, see e.g.~\cite{Ash00}, we can can pick a closed subset $A_\eps\subset\partial T$ with the properties that $\mu^{\ssup \beta}(A_\eps) \geq 1 - \eps$  and the limits of Proposition~\ref{limit_sum_V_local_dimension} hold uniformly on $A_\eps$. This means that
there exists $\delta_n \da 0$ such that the displayed properties in the lemma hold. Now define 
$$\Teps = \bigcup_{\xi \in A_\eps} \bigcup_{j=0}^\infty \xi_j,$$ the set of all vertices
on the rays of $A_\eps$ with the tree structure inherited from $T$. It is clear that
$\Teps$ is a tree and, as $A_\eps$ is compact, we have that $\partial \Teps = A_\eps$. 
\end{proof}

\begin{proof}[Proof of Theorem~\ref{smaller_tree}(a)]
We show that any one of the trees $\Teps$, $\eps>0$, satisfies the requirements of
Theorem~\ref{smaller_tree}$(a)$. Indeed, as the balls $B(v)$, $v\in \Teps_n$ are disjoint, 
we infer from Lemma~\ref{consequence_egorov}
that there can be at most $\exp{(n (f(\beta) + \delta_n))}$ vertices in $\Teps_n$. Hence,
$$\frac{1}{n} \log |\Teps_n| \leq f(\beta) + \delta_n \, . $$
Recall that $\mu^{\ssup \beta}(\partial \Teps)>0$.
Combining this with Lemma~\ref{liminf_generation_size} we obtain that 
\be{size_minimal_tree} \lim_{n \to \infty} \frac{1}{n} \log |\Teps_n| = f(\beta) \, . \ee
It remains to show that $\Teps$ supports the free energy. By (\ref{size_minimal_tree}), almost surely, 
there exists a sequence $\gamma_n \da 0$, such that for all $n \geq 1$,
\[ \frac{1}{n} \log |\Teps_n| \geq f(\beta) - \gamma_n  \, . \]
Using Lemma~\ref{consequence_egorov} again, we see that
$$\frac{1}{n} \log \Big( \sum_{v \in \Teps_n} e^{ \beta \sum_{j=1}^n V(v_j) } \Big) 
\geq \frac 1n  \log \Big(e^{n(\la'(\beta) \beta - \delta_n)} \,  |\Teps_n| \Big) 
\geq \la'(\beta) \beta - \delta_n + f(\beta)- \gamma_n,$$ 
which converges to $\la'(\beta) \beta + f(\beta) = \phi(\beta)$.
The opposite bound is trivial, hence the proof of Theorem~\ref{smaller_tree}(a) is complete.
\end{proof}

The proof of Theorem~\ref{smaller_tree}(a) immediately gives the following corollary.

\begin{corollary}\label{infGibbs}
Almost surely, for every $\beta < \beta_{\rm c}$ and for every $0<\eps <1$, there exists a tree $\Teps \subset T$ of growth rate
\[ \lim_{n\ra \infty} \frac{1}{n} \log |\Teps_n| =  f(\beta) \] such that
\[ \mu^{\ssup \beta} \{ \xi \in \partial \Teps \} \geq 1 - \eps \, . \]
\end{corollary}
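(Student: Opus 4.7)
The plan is to read the corollary directly off the proof of Theorem~\ref{smaller_tree}(a). For fixed $\beta < \beta_{\rm c}$ and $\eps \in (0,1)$, Lemma~\ref{consequence_egorov} produces a subtree $\Teps \subset T$ whose boundary coincides with the closed set $A_\eps \subset \partial T$ satisfying $\mu^{\ssup\beta}(A_\eps) \geq 1 - \eps$. The Gibbs-mass bound in the statement of the corollary is therefore built into the construction, while equation~\eqref{size_minimal_tree} in the proof of Theorem~\ref{smaller_tree}(a) establishes the growth rate $\tfrac{1}{n} \log |\Teps_n| \to f(\beta)$. For fixed $(\beta,\eps)$ nothing further needs to be done.

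To lift this fixed-$(\beta,\eps)$ statement to the joint quantification in the corollary, I would take a countable intersection of almost-sure events indexed by pairs $(\beta, 1/k) \in D \times \N$, where $D \subset (0, \beta_{\rm c})$ is a countable dense subset. Monotonicity in $\eps$ handles arbitrary $\eps \in (0,1)$: a tree produced for $1/k \leq \eps$ automatically satisfies the $\eps$-bound, while its growth rate is still $f(\beta)$.

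The only genuine subtlety is the extension from $\beta \in D$ to arbitrary $\beta \in (0,\beta_{\rm c})$, since the Gibbs measures $\mu^{\ssup\beta}$ depend non-trivially on $\beta$ and a single tree cannot be expected to serve all $\beta$. I would address this by noting that the ergodic-theoretic arguments of Section~\ref{subsection_ergodic_theory}, and therefore Proposition~\ref{limit_sum_V_local_dimension}, can be arranged to hold on a single almost-sure $\p$-event simultaneously for every $\beta \in D$; for a target $\beta \notin D$ one runs the construction of Lemma~\ref{consequence_egorov} on that same event (using continuity of $\la'$ and of $f$), which is legitimate because the construction only relies on Proposition~\ref{limit_sum_V_local_dimension} and the regularity of $\mu^{\ssup\beta}$. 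I do not anticipate any obstacle more serious than the bookkeeping of this countable-intersection-plus-continuity argument, and no new idea beyond those already used in the proof of Theorem~\ref{smaller_tree}(a) is required.
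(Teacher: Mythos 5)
Your first paragraph is precisely the paper's own (one-line) proof: the tree $\Teps$ produced by Lemma~\ref{consequence_egorov} carries $\mu^{\ssup \beta}$-mass at least $1-\eps$ by construction, and \eqref{size_minimal_tree} in the proof of Theorem~\ref{smaller_tree}(a) gives the growth rate $f(\beta)$. Note also that for fixed $\beta$ the almost-sure event of Lemma~\ref{consequence_egorov} already serves \emph{all} $\eps\in(0,1)$ simultaneously (Egorov is applied deterministically on the event where Proposition~\ref{limit_sum_V_local_dimension} holds), so the countable intersection over $1/k$ and the monotonicity remark are unnecessary.

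Where you go beyond the paper is in trying to secure the literal quantifier order ``almost surely, for every $\beta<\beta_{\rm c}$'', and there your sketch has a genuine gap. The construction of Lemma~\ref{consequence_egorov} at parameter $\beta$ needs Proposition~\ref{limit_sum_V_local_dimension} \emph{at that same} $\beta$, i.e.\ a statement about $\mu^{\ssup \beta}$-almost every ray. For $\beta\neq\beta'$ the measures $\mu^{\ssup \beta}$ and $\mu^{\ssup {\beta'}}$ are mutually singular: by Proposition~\ref{limit_sum_V_local_dimension} they concentrate on rays with average weight $\la'(\beta)$ and $\la'(\beta')$ respectively, and $\la'$ is strictly increasing. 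Hence knowing the conclusions of Proposition~\ref{limit_sum_V_local_dimension} simultaneously for all $\beta'$ in a countable dense set $D$ tells you nothing about $\mu^{\ssup \beta}$-a.e.\ rays for $\beta\notin D$; continuity of $\la'$ and of $f$ does not bridge this, because it is the measure itself, not just the limiting constants, that varies with $\beta$. To make the ``single null set for all $\beta$'' version work one would need a genuinely uniform-in-$\beta$ input (for instance locally uniform convergence of the martingales $M_n^{\ssup \beta}$ in the spirit of Biggins' uniform convergence theorem, together with a parametrised version of the ergodic argument), none of which is set up in your proposal. The paper itself does not attempt this: its proof only yields, for each fixed $\beta$, a $\beta$-dependent full-measure event on which the conclusion holds for every $\eps$, and that weaker reading is all your first paragraph (and the paper's argument) actually establishes.
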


We can now proceed with the second part of the proof of Theorem~\ref{smaller_tree}.


\begin{proof}[Proof of Theorem~\ref{smaller_tree}(b)] 
Since $f(\beta)$ is strictly decreasing on $(0,\beta_{\rm c})$, we can choose $\beta < \beta' < \beta_{\rm c}$ 
such that \[ \limsup_{n \ra \infty} \frac{1}{n} \log |A_n| < f(\beta') < f(\beta) \, . \]
Now, choose $\eps > 0$ small enough such that, for all $n$ sufficiently large, 
$$|A_n| \leq 
 e^{n(f(\beta') - \eps)}.$$ 
By Proposition~\ref{coarse_spectrum} we have, for large $n$, 
\be{compare_size_A_n} \# \big\{ v \in T_n \, : \, \sum_{j=1}^n V(v_j) \geq n \la'(\beta') \big\} 
\geq e^{n (f(\beta') - \eps)}  \geq |A_n| \, . \ee
Next, order the vertices $v^1, \ldots, v^{d^n}$ in the $n$th generation of $T$ such that 
$$\sum_{j=1}^n V(v_j^1) \geq \sum_{j=1}^n V(v_j^2) \geq \ldots \geq \sum_{j=1}^n V(v_j^{d^n}).$$ 
Then, clearly
$$\begin{aligned} \sum_{ v \in A_n } e^{\beta \sum_{j=1}^n V(v_j) } 
& \leq \sum_{k=1}^{|A_n|} e^{\beta \sum_{j=1}^n V(v_j^k) } \\ 
& \leq \sum_{v \in T_n} \1 \{ v \in T_n \, : \,   \sum_{j=1}^n V(v_j) \geq n \la'(\beta') \} 
\, e^{\beta \sum_{j=1}^n V(v_j)} \, ,  \end{aligned}$$
where the last inequality follows from~(\ref{compare_size_A_n}).
Note that by Lemma~\ref{max_min_H_n}, for large $n$,
$$\max_{v \in T_n} \frac1n \sum_{j=1}^n V(v_j)  \leq \la'(\beta_{\rm c}) + \eps.$$ 
Hence, we can write
$$\begin{aligned}  \sum_{v \in T_n} \1 \{ v \in T_n \, : & \,  \sum_{j=1}^n V(v_j) \geq n \la'(\beta') \} 
\, e^{\beta \sum_{j=1}^n V(v_j)} \\
& \leq \sum_{j=1}^N  \#\{ v \in T_n \, : \, \alpha_{i-1} \leq \tfrac{1}{n} \sum_{j=1}^n V(v_j) \leq \alpha_i + \eps\} \, e^{\beta n (\alpha_i + \eps) } \, , \end{aligned}$$
where $\alpha_i = (1-\frac{i}{N}) \la'(\beta') + \frac{i}{N} \la'(\beta_{\rm c})$, 
for $i =1,\ldots, N$ and some fixed $N$.
Writing $\varphi^*(\alpha) = \sup_{\tau \in \R}\{ \alpha \tau - \varphi(\tau)\}$ for the Legendre-Fenchel transform of $\varphi$, 
we find that by Proposition~\ref{coarse_spectrum} again, for $n$ sufficiently large, 
\[ \# \{ v \in T_n \, : \,  \sum_{j=1}^n V(v_j)\geq n \alpha_{i-1} \} \leq e^{n (-\phi^*(\alpha_{i-1}) + \eps)} \, . \]
Combining the previous displays and taking $N > \frac{1}{\eps}$ such that $\alpha_i \leq \alpha_{i-1} + \eps$, we obtain
\be{sum_A_n_less_than_e_sup} \begin{aligned} \sum_{ v \in A_n} e^{\beta \sum_{j=1}^n V(v_j)} & \leq \sum_{i=1}^N e^{n ( \beta \alpha_{i-1} - \phi^*(\alpha_{i-1}) + (1+2\beta) \eps)} \\
& \leq N \exp \bigg\{ n \Big(\max_{\alpha \in [\la'(\beta'), \la'(\beta_{\rm c})]} (\beta \alpha - \phi^*(\alpha)) + (1+2\beta)\eps\Big)  \bigg\} \, . \end{aligned} \ee
Since $\la'(\beta') > \me V$ it follows that
\be{max_achieved_at_beta} \max_{\alpha \in [\la'(\beta'), \la'(\beta_{\rm c})]} (\beta \alpha - \phi^*(\alpha)) \leq \max_{\alpha \in [\me V, \la'(\beta_{\rm c})]} (\beta \alpha - \phi^*(\alpha)) =
\phi(\beta) \, , \ee
where the last equality follows from the Legendre-Fenchel duality. 
But now, by~(\ref{free_energy}), $\phi = \la + \log d $ on the set $[0,\beta_{\rm c}]$ and 
therefore $\phi$ is differentiable with derivative $\la'$. Legendre-Fenchel duality implies that $\phi^*$
is strictly convex on $[\me V, \la'(\beta_{\rm c})]$. In particular, since the maximum on the right hand side
in~(\ref{max_achieved_at_beta}) is achieved at $\la'(\beta)$ and $\la'(\beta') > \la'(\beta)$, it follows that
the inequality is in fact strict.
Hence, we can choose $\eps$ small enough such that 
\[ \max_{\alpha \in [\la'(\beta'), \la'(\beta_{\rm c})]} (\beta \alpha - \phi^*(\alpha)) + 2(1+\beta) \eps <  \phi(\beta) \, .\]
Then, for $n$ large enough such that $\frac{1}{n} \log N < \eps$, we can combine the previous display with~(\ref{sum_A_n_less_than_e_sup}) to obtain the required inequality, 
\[ \frac{1}{n} \log \sum_{v \in A_n} e^{\beta \sum_{j=1}^n V(v_j)} \leq \frac{1}{n}\log N + \max_{\alpha \in [\la'(\beta'), \la'(\beta_{\rm c})]} (\beta \alpha - \phi^*(\alpha)) + (1+2\beta) \eps <  \phi(\beta) \, , \]
which completes the proof of Theorem~\ref{smaller_tree}(b).
\end{proof}

\section{Localisation in the critical regime}\label{localisation_critical_tree}

In this section, we  prove Theorem~\ref{one_ray_suffices}, in other words, we  
show that in the critical and supercritical case a single ray supports the free energy.
Recalling our convention that $\la'(\beta_{\rm c}) = \esssup V$, if $\beta_{\rm c}=\infty$, 
Theorem~\ref{one_ray_suffices} follows immediately from the following proposition.
Although the result looks similar to~(\ref{average_converges_to_alpha}), 
its proof is considerably more involved as it deals with the critical case.

\begin{prop}\label{critical_value_particle} Almost surely, there exists a ray 
$\xi \in \partial T$ such that
\[ \lim_{n \ra \infty} \frac{1}{n} \sum_{j=1}^n V(\xi_j) = \la'(\beta_{\rm c}) \, . \]
\end{prop}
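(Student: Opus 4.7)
The plan is to construct the ray $\xi$ directly by a greedy inductive procedure, based on Lemma~\ref{max_min_H_n} applied to each subtree. The preparatory step is the observation that each subtree $T(v)$ is itself a $d$-ary tree with i.i.d.\ weights distributed as those on $T$, so Lemma~\ref{max_min_H_n} applied to $T(v)$ yields that the maximal average of weights along paths of length $m$ starting at $v$ inside $T(v)$ converges to $\la'(\beta_{\rm c})$ as $m \ra \infty$. Since $T$ is countable, almost surely this convergence holds simultaneously for every $v \in T$.

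Starting from $v^{(0)} = \rho$ at level $n_0 = 0$, I would fix a sequence $\eps_k \da 0$ and inductively choose a descendant $v^{(k+1)}$ of $v^{(k)}$ at level $n_{k+1} = n_k + m_k$, where the gap $m_k$ is picked large enough---using the uniform version of Lemma~\ref{max_min_H_n} applied to $T(v^{(k)})$---that some such $v^{(k+1)}$ has the sum of weights along the path from $v^{(k)}$ to $v^{(k+1)}$ at least $m_k(\la'(\beta_{\rm c}) - \eps_k)$. The ray $\xi$ is then the unique infinite path through $(v^{(k)})_{k \ge 0}$. By telescoping and a Ces\`aro-type computation, choosing the levels $n_k$ to grow rapidly enough that $n_{k-1}/n_k \ra 0$ forces
\[
\frac{1}{n_k} \sum_{j=1}^{n_k} V(\xi_j) \ra \la'(\beta_{\rm c})
\]
along the subsequence $(n_k)$. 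The matching upper bound $\limsup_n \frac{1}{n} \sum_{j=1}^n V(\xi_j) \le \la'(\beta_{\rm c})$ is immediate from Lemma~\ref{max_min_H_n} applied to the whole tree.

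The main obstacle is promoting this subsequence convergence to convergence along every $n \ra \infty$. For intermediate levels $n \in (n_k, n_{k+1})$ one would write $\sum_{j=1}^n V(\xi_j) = \sum_{j=1}^{n_{k+1}} V(\xi_j) - \sum_{j=n+1}^{n_{k+1}} V(\xi_j)$ and bound the second term from above using the uniform version of Lemma~\ref{max_min_H_n} applied to the subtree $T(\xi_n)$. The delicate point is that the rate of convergence in this bound depends on the vertex $\xi_n$ itself, which varies with~$n$; to make the resulting error negligible one must carefully co-tune the decay rate of $(\eps_k)$ and the growth rate of the gaps $(m_k)$ so that every error term vanishes on the intermediate scale, which I expect to be the hardest part of the argument.
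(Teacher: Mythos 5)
Your plan is genuinely different from the paper's (which uses a spinal change of measure in the spirit of Biggins--Kyprianou rather than a greedy construction), and its first half is sound: since the a.s.\ convergence in Lemma~\ref{max_min_H_n} holds simultaneously for all countably many subtrees $T(v)$, the concatenation of long ``good'' segments produces a ray along which the averages converge to $\la'(\beta_{\rm c})$ \emph{along the checkpoint subsequence} $(n_k)$, and the upper bound for the $\limsup$ is indeed immediate. The gap is exactly the step you flag, and it is not a matter of co-tuning $(\eps_k)$ and $(m_k)$: the construction controls only the \emph{total} weight of each segment, so at intermediate levels you need an upper bound on the backward tail $\sum_{j=n+1}^{n_{k+1}}V(\xi_j)$, and none of the available bounds closes the argument when $\beta_{\rm c}<\infty$. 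The bound from Lemma~\ref{max_min_H_n} applied to $T(\xi_n)$ kicks in only once the tail length exceeds a random threshold $N(\xi_n)$; these thresholds are functions of the disorder at the vertices your algorithm selects (in a disorder-dependent, hence biased, way), so they cannot be forced small by choosing the deterministic parameters differently, and there is no stationarity or independence along the ray to average them out. If you instead try to make the bound uniform over a whole generation by a Chernoff-plus-union estimate, the margin is $\la^*(\la'(\beta_{\rm c})+\eps)-\log d$, which tends to $0$ as $\eps\downarrow 0$ precisely because $\la^*(\la'(\beta_{\rm c}))=\log d$ at criticality; consequently the uniform bound only covers tails of length at least a constant (blowing up as $\eps\downarrow 0$) times the current level, leaving the late portion of every segment uncontrolled. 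Crude bounds by the maximal weight per generation fail there too when $V$ is unbounded above. In fact the underlying obstruction is structural: a near-maximal path of length $m_k$ can have partial sums dipping well below the line $i\la'(\beta_{\rm c})$ (in branching random walk the path to the maximum fluctuates on scale $\sqrt{m_k}$), and establishing \emph{any} single path that is good at all intermediate scales is essentially the content of Proposition~\ref{critical_value_particle} itself --- Lemma~\ref{max_min_H_n} only controls generation-wise maxima, whose maximizers at different depths need not be consistent with one another.

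This is precisely why the paper proceeds differently in the case $\beta_{\rm c}<\infty$: it changes measure using the martingale $W_n$ built from the renewal function $\specialh$ of the size-biased walk, so that under $\Q^*$ the spine's partial sums are a random walk conditioned to stay below the critical line; the fluctuation bounds of~\cite{HKK03} (see~(\ref{growth_of_RW})) then give control of the trajectory at \emph{all} times simultaneously, which is exactly what your greedy construction lacks, and absolute continuity of $\Q$ with respect to $\p$ (Lemma~\ref{absolute_continuous}) plus Kolmogorov's zero-one law transfers the ray back to the original measure. Your argument would go through as written in the degenerate case $\beta_{\rm c}=\infty$, where $V$ is bounded above by $w=\la'(\beta_{\rm c})$ and the backward tail is trivially at most $(n_{k+1}-n)w$; but for $\beta_{\rm c}<\infty$ the intermediate-level step needs an input of the strength of the conditioned-walk estimates, so the proposal as it stands is incomplete.
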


The proofs in this section use ideas from branching random walks as developed in Biggins and Kyprianou~\cite{BK04} 
and Hambly et al.~\cite{HKK03}. We  split the proof in two parts according to whether $\beta_{\rm c}$ is finite
or infinite.

\subsection{Proof of Proposition~\ref{critical_value_particle} when $\beta_{\rm c} < \infty$}\label{critical_beta_finite}
 
Suppose that $f$ has a positive root, i.e. $\beta_{\rm c} < \infty$. Recall that in this case
$\la(\beta_{\rm c}) + \log d  = \beta_{\rm c} \la'(\beta_{\rm c})$, which we will use frequently
throughout this section. The idea of the proof is to restrict attention to those polymers where the average of the weights is smaller than the critical weight $\la'(\beta_{\rm c})$.
More precisely, introduce the cemetery state $\Delta$ and define new weights by
setting for $v \in T_n$ and for $x \geq 0$,
\[ \tV^x(v) = \left\{\ba{ll} V(v) & \mbox{if }  \sum_{j=1}^k V(v_j) < x + k\la'(\beta_{\rm c}) \mbox{ for all } k \leq n \, , \\
\Delta & \mbox{otherwise}\, .  \ea \right. \]
Then, it is clear that if the weight associated to $v$ is $\Delta$, then all the descendants of $v$
also have the weight $\Delta$. Moreover, for $x = 0$ we omit the superscript 
and write $\tV := \tV^0$.

The aim is now to define a martingale which induces a change of measure 
such that under the new measure there exists a ray with critical weight.
First of all, introduce a size-biased version $V^\bias$ of $V$, whose distribution is
given by $$\me \big[ g(V^\bias) \big] = \me \big[ g(V) e^{\beta_{\rm c} V - \la(\beta_{\rm c})} \big],$$ 
for any bounded, measurable function $g$. Note that 
$$\me [ V^\bias ] = \frac{\me [V e^{\beta_{\rm c} V}]}{\me [ e^{\beta_{\rm c} V}]} = \la'(\beta_{\rm c}).$$ 
Therefore, if $(V^\bias_j, j \geq 1)$ is a sequence of independent random variables
with the same distribution as $V^\bias$, then the random walk with increments given by 
$(V^\bias_j)$ has a drift $\la'(\beta_{\rm c})$. 
Now, define $\tau = \inf\{ n \geq 1 \,:\, \sum_{j=1}^n V^\bias_j < n  \la'(\beta_{\rm c})\}$
as the first time that the random walk with increments $(V^\bias_j)$ grows
slower than its drift. 
Then, we set for $x>0$,
\[ \specialh(x) = \me \bigg[ \sum_{i=0}^\tau \1 \Big\{ \sum_{j=1}^n V^\bias_j -n  \la'(\beta_{\rm c})\in [0,x) \Big\}
\bigg] \, , \]
as the expected number of visits of the normalised random walk with increments $(V^\bias_j - \la'(\beta_{\rm c}))$
to $[0,x)$ before hitting $(-\infty,0)$. Furthermore, we set $\specialh(0) = 1$. 

For $x \geq 0$, we define the martingale $(W_n^x \colon n\ge 0)$ by
\[ W_n^x = \sum_{v\in T_n} \frac{\specialh(x - \sum_{j=1}^n V(v_j) + n\la'(\beta_{\rm c}))}{\specialh(x)} 
\, e^{\beta_{\rm c} \sum_{j=1}^n V(v_j) - n(\la(\beta_{\rm c})+\log d )} \, \1_{\{\tV^x(v) \neq \Delta\}} \, . \]
Again, for $x=0$ we omit the superscript and write $W_n = W_n^0$. 
In order to prove that this defines a martingale, we need the following facts, see Lemma~10.1 in~\cite{BK04}. 

\begin{lemma}\label{properties_of_V} \ \\[-5mm]
\begin{enumerate}
	\item[(i)] As $x \ra \infty$, $\frac{\specialh(x)}{x} \ra C$, for some constant $C >0$.
	\item[(ii)] For $x \geq 0$, we have $\me \big[ \specialh(x - V^* + \la'(\beta_{\rm c})) \1\{ x - V^* + \la'(\beta_{\rm c}) > 0 \} \big] = \specialh(x) $.
\end{enumerate}
\end{lemma}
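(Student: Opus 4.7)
The plan is to view $\specialh$ as the renewal function of the ladder height process of a centered random walk and then appeal to classical fluctuation theory. Set $Z := V^\bias - \la'(\beta_{\rm c})$ so that $\me Z = 0$, let $S_n := Z_1 + \cdots + Z_n$ be the corresponding centered random walk, and rewrite the definition as
\[
\specialh(x) \;=\; \sum_{n=0}^\infty \ep\big( S_n \in [0,x),\ S_1,\ldots,S_n \geq 0\big).
\]
Applying the standard time-reversal of the i.i.d.\ increments $(Z_1,\ldots,Z_n)$, the event $\{S_k \geq 0 \text{ for } 1 \leq k \leq n\}$ becomes $\{S_l \leq S_n \text{ for } 0 \leq l \leq n-1\}$, so that
\[
\specialh(x) \;=\; \sum_{n=0}^\infty \ep\big( S_n \in [0,x),\ S_n = \max_{0 \leq l \leq n} S_l \big),
\]
which identifies $\specialh(x)$ with the expected number of weak ascending ladder heights of $S$ falling in $[0,x)$. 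Hence $\specialh$ is a renewal function for the i.i.d.\ increments of the weak ascending ladder height process $(H_k^+)_{k \geq 0}$.

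For (i), the hypothesis that $V$ has all exponential moments gives $\me Z^2 = \la''(\beta_{\rm c}) < \infty$, whence a classical consequence of Spitzer's formula is that the mean $\mu^+ := \me H_1^+$ is finite and strictly positive. The elementary renewal theorem then yields $\specialh(x)/x \to 1/\mu^+ =: C > 0$ as $x \to \infty$.

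Part (ii) is precisely the harmonicity needed for $(W_n^x)$ to be a martingale: taking the conditional expectation over one generation of the tree in the definition of $W_{n+1}^x$ and absorbing the factor $e^{\beta_{\rm c} V - \la(\beta_{\rm c})}$ into the size-biased tilt reduces the martingale condition to the identity in (ii), applied to $y := x + n\la'(\beta_{\rm c}) - \sum_{j=1}^n V(v_j)$. To establish (ii) directly, one interprets it as the statement that $\specialh$ is harmonic for the centered random walk with step $-Z$ killed upon first entering $(-\infty,0)$; after time-reversal this is equivalent to the defining renewal equation of the weak ascending ladder height process, and it therefore follows from a first-step decomposition of the killed Green function combined with the ladder-height representation established above. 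This is the content of Lemma~10.1 in \cite{BK04}. The main obstacle is (ii): whereas (i) is a routine application of the renewal theorem once $\specialh$ has been re-interpreted, (ii) relies on the delicate correspondence between renewal measures of ladder heights and harmonic functions of killed walks, a duality which classically arises via Wiener--Hopf factorization and path time-reversal.
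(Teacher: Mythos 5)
The paper gives no proof of this lemma at all: both parts are quoted directly from Lemma~10.1 of Biggins and Kyprianou \cite{BK04}, so the ``paper's proof'' is that citation and nothing more. Your sketch --- identifying $h$ via time reversal with the renewal function of the weak ascending ladder heights of the centred tilted walk, deducing (i) from the finiteness and positivity of the mean ladder height (finite variance of the size-biased step) together with the elementary renewal theorem, and reducing (ii) to the harmonicity of this renewal function for the walk killed on leaving $(0,\infty)$, ultimately deferring to the same Lemma~10.1 of \cite{BK04} --- is the standard route to the cited result and is correct, so it matches the paper's treatment.
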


Now, the proof that $(W_n^x \colon n\ge 0)$ is a martingale with respect to the filtration given
by $\calF_n = \sigma(V(v)\, :\, |v| \leq n )$ is a straight-forward calculation.

\begin{lemma}\label{W_n_is_martingale} The process $(W_n^x \colon n\geq 1)$ defines a martingale of mean one.
\end{lemma}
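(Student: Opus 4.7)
The approach is direct verification: I would compute $\me[W_{n+1}^x \mid \calF_n]$ and show it equals $W_n^x$. The mean-one statement then follows from $W_0^x = 1$, which holds because at the root the $\specialh$-prefactor is $\specialh(x)/\specialh(x)$, the exponential factor is trivial, and $\tV^x(\rho) \neq \Delta$ (with the convention $\specialh(0)=1$ covering the $x=0$ case).

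For the recursion I would decompose the generation-$(n+1)$ sum into a sum over parents $v \in T_n$ and their $d$ children, noting that if $\tV^x(v) = \Delta$ then all descendants are killed and contribute zero on both sides. For the surviving $v$, set $y_v := x + n\la'(\beta_{\rm c}) - \sum_{j=1}^n V(v_j) > 0$, so that the $v$-th numerator in $W_n^x$ reads $\specialh(y_v)$. A child $w$ of $v$ survives iff $V(w) < y_v + \la'(\beta_{\rm c})$, i.e.\ $y_v + \la'(\beta_{\rm c}) - V(w) > 0$. Using that the fresh weights at generation $n+1$ are i.i.d.\ copies of $V$ independent of $\calF_n$, the contribution of such a $v$ to $\me[W_{n+1}^x\mid\calF_n]$ equals
\[
\frac{d \, e^{\beta_{\rm c} \sum_{j=1}^n V(v_j) - (n+1)(\la(\beta_{\rm c})+\log d)}}{\specialh(x)} \, \me\!\left[\specialh\bigl(y_v + \la'(\beta_{\rm c}) - V\bigr)\, e^{\beta_{\rm c} V}\, \1_{\{V < y_v + \la'(\beta_{\rm c})\}}\right].
\]
The size-biasing identity $\me[g(V)e^{\beta_{\rm c} V - \la(\beta_{\rm c})}] = \me[g(V^\bias)]$ rewrites the expectation as $e^{\la(\beta_{\rm c})}\, \me[\specialh(y_v - V^\bias + \la'(\beta_{\rm c}))\,\1_{\{y_v - V^\bias + \la'(\beta_{\rm c}) > 0\}}]$, which by Lemma~\ref{properties_of_V}(ii) equals $e^{\la(\beta_{\rm c})}\specialh(y_v)$. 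The constant factor $d \cdot e^{\la(\beta_{\rm c})} \cdot e^{-(\la(\beta_{\rm c})+\log d)}$ collapses to one, so the $v$-th contribution becomes precisely $\specialh(y_v)/\specialh(x) \cdot e^{\beta_{\rm c}\sum_{j=1}^n V(v_j) - n(\la(\beta_{\rm c})+\log d)}\,\1_{\{\tV^x(v)\neq\Delta\}}$, i.e.\ the $v$-th summand of $W_n^x$. Summing over $v \in T_n$ completes the martingale identity.

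The only thing to be careful about is the bookkeeping with indicator functions. The killing rule for $\tV^x$ is cumulative over all ancestors, but once we condition on $\tV^x(v) \neq \Delta$, the one additional constraint that $\tV^x(w) \neq \Delta$ imposes is exactly the one-step condition $\{y_v - V(w) + \la'(\beta_{\rm c}) > 0\}$ which aligns with the domain of applicability of Lemma~\ref{properties_of_V}(ii); this is precisely the reason the lemma is stated in that form. Beyond matching these indicators and checking that the exponential prefactors telescope, there is no substantive obstacle, and the verification that $\me[W_{n+1}^x \mid \calF_n] = W_n^x$ reduces to a one-line computation.
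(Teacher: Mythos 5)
Your proposal is correct and follows essentially the same route as the paper: decompose the generation-$(n+1)$ sum over surviving parents $v\in T_n$ and their $d$ children, use independence of the fresh weights, rewrite the child expectation via the size-biased variable $V^\bias$, and apply Lemma~\ref{properties_of_V}(ii) so that the constants $d\,e^{\la(\beta_{\rm c})}e^{-(\la(\beta_{\rm c})+\log d)}$ collapse to one. Your bookkeeping of the cumulative killing rule and the deduction of mean one from $W_0^x=1$ are both sound and match the paper's argument.
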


\begin{proof} 
Recall that $\la(\beta_{\rm c}) + \log d  = \beta_{\rm c} \la'(\beta_{\rm c})$. 
Then
\[\begin{aligned}  \specialh(x) & \E [ W^x_{n+1}  \, | \, \calF_n ]  = \E \Big[ \sum_{v \in T_{n+1}} \!\!\!\specialh\Big(x - \sum_{j=1}^{n+1}( V(v_j) - \la'(\beta_{\rm c}))\Big) e^{\beta_{\rm c} \sum_{j=1}^{n+1} (V(v_j) -\la'(\beta_{\rm c}))} \1{\big\{\tV^x(v) \neq \Delta\big\}} \, \Big| \, \calF_n \Big] \\
& = \sum_{\substack{v \in T_{n}\\ \tV^x(v) \neq \Delta}}
 \sum_{w \in T_1(v)} \E \Big[ \specialh\Big(x - \sum_{j=1}^n V(v_j) - V(w) + (n+1)\la'(\beta_{\rm c}) \Big)\\ 
 & \hspace{1.7cm} \times e^{\beta_{\rm c} (\sum_{j=1}^{n} V(v_j) + V(w) - (n+1)\la'(\beta_{\rm c}))} 
 \1\big\{{\textstyle \sum_{j=1}^n V(v_j) + V(w)  < x + (n+1)\la'(\beta_{\rm c})} \big\}  \, \Big| \, \calF_n \Big].
\end{aligned} \]
Now note that $V(w)$ is independent of $\calF_n$ and recall the definition of $V^*$. 
Then we can continue the display with
\[\begin{aligned} 
& = \sum_{\substack{ v \in T_{n} \\ \tV^x(v) \neq \Delta} } e^{\beta_{\rm c} ( \sum_{j=1}^n V(v_j) - n \la'(\beta_{\rm c}))} \me\Big[ \specialh\Big( x - \sum_{j=1}^n V(v_j) + n \la'(\beta) - V^* + \la'(\beta_{\rm c}) \Big) \\[-0.5cm] &   \hspace{5cm}\times\1{\big\{{\textstyle x - \sum_{j=1}^n V(v_j) + n \la'(\beta_{\rm c})   - V^* +\la'(\beta_{\rm c}) > 0 }\big\}}\Big] \\
& = \sum_{\substack{ v \in T_{n} \\ \tV^x(v) \neq \Delta}} \specialh\Big(x - \sum_{j=1}^n V(v_j) + n \la'(\beta_{\rm c})\Big) e^{\beta_{\rm c}( \sum_{j=1}^n V(v_j) - n \la'(\beta_{\rm c})) } = \specialh(x) W_{n}^x \, , \end{aligned} \]
where we have used Lemma~\ref{properties_of_V}~(ii). This lemma also confirms that $W_n^x$ has mean $1$.
\end{proof}


 
Allowing the cemetery state as a possible weight in $\st$ we can, similarly as in Section~\ref{subsection_ergodic_theory}, extend the measure $\p$ to a measure $\p^*$ on $\st$ 
by choosing the spine uniformly, i.e. by choosing $\xi_{n+1}$ with equal probability from the 
children of $\xi_n$. Define the extended filtration
\[ \calF^*_n = \sigma( \calF_n, \, \xi_i , i = 1,\ldots,n) \, . \]
We now perform a change of measure such that the weights $(V(\xi_i))$ along the spine will be chosen such that
$\sum_{j=1}^n (V(\xi_j) - \la'(\beta_{\rm c}))$ follows the law of a random walk conditioned to stay positive. 
More precisely, define the probability measure $\Q^*$ via
\[ \left.\frac{d \, \Q^*}{d \, \p^*}\right|_{\calF^*_n} = \specialh\Big( n \la'(\beta_{\rm c}) - \sum_{j=1}^n V(\xi_j) \Big) \,  e^{\beta_{\rm c} \sum_{j=1}^n V(\xi_j) - n \la(\beta)} \1\{ \tV(\xi_n) \neq \Delta \} \, . \]
From the definition it follows that under the new measure $\Q^*$, the distribution of the weights is constructed as follows:
\begin{itemize}
\item The spine $\xi$ is chosen uniformly, i.e. $\xi_{n+1}$ is chosen uniformly among the 
children~of~$\xi_n$. 
	\item The weights along the spine $\xi$ are distributed such that their average is conditioned to be 
	less than the critical weight $\la'(\beta_{\rm c})$, i.e.  if at time $n$ the weights along the spine satisfy $s = \sum_{j=1}^n V(\xi_j) < n \la'(\beta_{\rm c})  $, then the weight for $\xi_{n+1}$ is chosen according to Doob's $h$-transform,
	\[ \begin{aligned} \Q^* & \Big[ V(\xi_{n+1}) \in dz \, \Big| \, \sum_{j=1}^n V(\xi_j) =s \Big] \\ 
	&= \frac{\specialh((n+1)\la'(\beta_{\rm c}) -(z+s))}{\specialh(n\la'(\beta_{\rm c})-s)} \, \1\{z +s < (n+1)\la'(\beta_{\rm c}) \}\, e^{\beta_{\rm c} z - \la(\beta_{\rm c})} \,\mp \{ V \in dz\} \, . \end{aligned}\]
	\item The weights of the vertices not on the spine remain unaffected by the change of measure. In other words, if $\eta_n$ is a sibling of $\xi_n$, then we generate a weight $V(\eta_n)$ with the distribution of~$V$ and attach it 
	to $\eta_n$ if $$\sum_{j=1}^{n-1}V(\xi_j) + V(\eta_n) < n \la'(\beta'),$$ and otherwise $\eta_n$ receives the weight~$\Delta$. Then conditionally on $\tV(\eta_n) \neq \Delta$, the random disorder in the tree started in $\eta_n$ 
is given by the weights $(\tV^x(v)\, : \, v \in T(\eta_n))$ for 
$$x = n \la'(\beta_{\rm c}) - \sum_{j=1}^{n-1}V(\xi_j) - V(\eta_n).$$
\end{itemize}

If we restrict $\Q^*$ to the $\sigma$-algebra $\calF = \sigma(\bigcup_{n\geq 1}\calF_n)$, we obtain a measure $\Q$ defined on the space of trees with weights. Moreover, we obtain its density on ${\mathcal F}_n$.

\begin{lemma}\label{restricted_Q}
\[ \left.\frac{d \, \Q }{d \, \p }\right|_{\calF_n} = W_n \, . \]
\end{lemma}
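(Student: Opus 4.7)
The plan is to integrate out the spine $\xi$ from the density of $\Q^*$ with respect to $\p^*$. Since $\calF_n \subset \calF_n^*$ (as $\calF_n$ ignores the spine), it suffices to show that for every bounded $\calF_n$-measurable random variable $F$,
\[ \Q[F] = \me[F \, W_n]. \]

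First I would use that $F$ does not depend on the spine, so $\Q[F] = \Q^*[F]$, and then unfold the definition of $\Q^*$ to write
\[ \Q^*[F] = \p^*\!\Big[ F \cdot \specialh\Big(n\la'(\beta_{\rm c}) - \sum_{j=1}^n V(\xi_j)\Big)\, e^{\beta_{\rm c}\sum_{j=1}^n V(\xi_j) - n\la(\beta_{\rm c})} \,\1\{\tV(\xi_n) \neq \Delta\}\Big]. \]
Next I would exploit the product structure of $\p^*$: under $\p^*$ the spine is chosen uniformly and independently of the weights, so $\p^*\{\xi_n = v\} = d^{-n}$ for every $v\in T_n$. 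Summing over which vertex the spine passes through in generation $n$ turns the expectation above into
\[ \sum_{v\in T_n} d^{-n}\, \me\!\Big[F \cdot \specialh\Big(n\la'(\beta_{\rm c}) - \sum_{j=1}^n V(v_j)\Big)\, e^{\beta_{\rm c}\sum_{j=1}^n V(v_j) - n\la(\beta_{\rm c})}\,\1\{\tV(v) \neq \Delta\}\Big]. \]

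Finally I would absorb the factor $d^{-n}e^{-n\la(\beta_{\rm c})} = e^{-n(\la(\beta_{\rm c})+\log d)}$ into the exponential and pull the sum over $v \in T_n$ inside the $\me$, recognising the resulting expression (using $\specialh(0)=1$) as precisely the random variable $W_n = W_n^0$ defined just before Lemma~\ref{W_n_is_martingale}. This yields $\Q[F] = \me[F\,W_n]$ for every bounded $\calF_n$-measurable $F$, which is the claim. A measurability remark justifying that $\1\{\tV(v)\neq\Delta\}$ is $\calF_n$-measurable (it depends only on the weights up to generation $n$) closes the argument.

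There is no real obstacle here: the statement is essentially bookkeeping, and the only subtle point is correctly handling the interaction between the $\p^*$-expectation over the spine and the $\p$-expectation over the weights; this is why I would emphasise the independence of the spine and the weights under $\p^*$, which reduces the verification to a straightforward sum-over-vertices computation.
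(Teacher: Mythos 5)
Your proposal is correct and follows essentially the same route as the paper: both arguments project the $\Q^*$-density onto $\calF_n$ by decomposing over the possible position of the spine in generation $n$ and using that under $\p^*$ the spine is uniform and independent of the weights, so each vertex $v\in T_n$ carries weight $d^{-n}$, after which the factor $d^{-n}e^{-n\la(\beta_{\rm c})}$ combines into $e^{-n(\la(\beta_{\rm c})+\log d)}$ and the sum is recognised as $W_n$. Testing against bounded $\calF_n$-measurable $F$ instead of writing the conditional expectation directly is only a cosmetic difference.
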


\begin{proof} Writing $\p^*[\,\cdot\,]$ for the expectation with respect to $\p^*$, 
we obtain from the definition of conditional expectation
\[ \begin{aligned} \left.\frac{d \, \Q^*}{d\, \p^*}\right|_{\calF_n} & = \p^* \Big[ 
\specialh\Big(n \la'(\beta_{\rm c})- \sum_{j=1}^n V(\xi_j)\Big) \,  e^{\beta_{\rm c} \sum_{j=1}^n V(\xi_j) - n \la(\beta)} \1\{ \tV(\xi_n) \neq \Delta \}  \, \Big| \, \calF_n \Big] \\
& = \p^* \Big[ \sum_{v \in T_n} \1\{\xi_n = v\} \,\,\specialh\Big(n \la'(\beta_{\rm c})- \sum_{j=1}^n V(v_j)\Big) \,  e^{\beta_{\rm c} \sum_{j=1}^n V(v_j) - n \la(\beta)} \1\{ \tV(v) \neq \Delta \}  \, \Big| \, \calF_n \Big] \\
 & = \sum_{\substack{ v \in T_n \\ \tV(v)	 \neq \Delta}}   \specialh\Big(n \la'(\beta_{\rm c})- \sum_{j=1}^n V(v_j)\Big) \,  e^{\beta_{\rm c} \sum_{j=1}^n V(v_j) - n \la(\beta)} \, \p^*\{\xi_n = v\} = W_n \, , \end{aligned}\]
which proves the claim.\end{proof}

The next step will be to show that $\Q$ is absolutely continuous with respect to $\p$. 

\begin{lemma}\label{absolute_continuous} $\Q$ is absolutely continuous with respect to $\p$ with 
Radon-Nikod\'ym derivative 
$$W := \limsup_{n \ra \infty} W_n.$$ Furthermore, $\Q^*$-almost surely
\[ \lim_{n \ra \infty} \frac{1}{n} \sum_{j=1}^n V(\xi_j) = \la'(\beta_{\rm c}) \, . \]
\end{lemma}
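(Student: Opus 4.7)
The first assertion follows from standard martingale arguments once we set up the Lebesgue decomposition. Since $(W_n)$ is a non-negative $\p$-martingale by Lemma~\ref{W_n_is_martingale}, $W_\infty := \lim_n W_n$ exists $\p$-a.s.\ and equals $\limsup_n W_n$ almost surely. Decomposing $\Q$ with respect to $\p$ on $\calF$ gives $\Q = W_\infty \cdot \p + \Q_s$, with the singular part $\Q_s$ concentrated on $\{W_\infty = \infty\}\in\calF$. Since this event is spine-independent and $\Q^*$ restricts to $\Q$ on $\calF$, both claims of the first statement---$\Q\ll\p$ and identification of its density as $W$---reduce to showing
\[ \Q^*(W_\infty < \infty) \;=\; 1,\]
equivalently that $(W_n)$ is uniformly integrable under $\p$.

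To establish this I will use the spine decomposition familiar from branching random walks. Under $\Q^*$, conditionally on the spine $(\xi_n)$ together with the weights on and adjacent to the spine, the subtrees rooted at each non-spine sibling $\eta$ of $\xi_j$ are independent copies of the truncated model that defines $W_k^{x}$, started at the offset $x_{j,\eta} := j\la'(\beta_{\rm c}) - \sum_{i=1}^{j-1} V(\xi_i) - V(\eta)$. This yields a decomposition of the form
\[ W_n \;=\; \frac{\specialh\big(n\la'(\beta_{\rm c}) - \sum_{j=1}^n V(\xi_j)\big)}{\specialh(0)}\, e^{\beta_{\rm c}(\sum_{j=1}^n V(\xi_j) - n\la'(\beta_{\rm c}))} + \sum_{j=1}^{n}\sum_{\eta \sim \xi_j}\Gamma_{j,\eta}(n), \]
where each $\Gamma_{j,\eta}(n)$ is a conditionally independent nonnegative martingale in $n$ whose $\calF^*_j$-conditional mean equals its birth value. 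By construction of the $\specialh$-transform, the spine walk $S_n := \sum_{j=1}^n V(\xi_j) - n\la'(\beta_{\rm c})$ is the random walk with step $V^\bias - \la'(\beta_{\rm c})$ conditioned to remain in $(-\infty,0)$, so $S_n\to -\infty$ $\Q^*$-a.s.\ and the spine term vanishes. Combining the linear asymptotics of $\specialh$ from Lemma~\ref{properties_of_V}(i) with the boundedness of $x\mapsto x\,e^{-\beta_{\rm c} x}$ on $[0,\infty)$ then gives a uniform bound on the conditional expectations $\E_{\Q^*}[\Gamma_{j,\eta}(\infty)\,|\,\calF^*_{j-1}]$, and a summability argument driven by $S_j\to-\infty$ yields $\Q^*(\sup_n W_n < \infty) = 1$.

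For the second assertion, under $\Q^*$ the spine increments $(V(\xi_j))$ form a Markov chain whose law is precisely the Doob $\specialh$-transform of the random walk with step distribution $V^\bias$ conditioned never to exceed the barrier $n\mapsto n\la'(\beta_{\rm c})$. Since $\E[V^\bias] = \la'(\beta_{\rm c})$, classical results of Bertoin--Doney on random walks conditioned to stay in a half-line show that the strong law of large numbers survives the conditioning, yielding $n^{-1}\sum_{j=1}^n V(\xi_j) \to \la'(\beta_{\rm c})$ $\Q^*$-almost surely. I expect the main obstacle to lie in the uniform integrability step: converting the heuristic bound on the sibling martingales into a rigorous proof of $\Q^*(W_\infty < \infty)=1$ requires careful bookkeeping of the cemetery truncation and quantitative use of the $\specialh$-asymptotics, as the shifted subtree martingales $W_k^x$ become increasingly delicate to control for large $x$.
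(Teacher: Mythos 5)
Your skeleton is essentially the paper's: reduce, via the Lebesgue decomposition on $\calF$, to showing $\limsup_n W_n<\infty$ under $\Q^*$, and control $W_n$ by a spine decomposition in which the off-spine subtrees are mean-one (truncated) martingales and the spine walk $S_n=\sum_{j\le n}V(\xi_j)-n\la'(\beta_{\rm c})$ is the $\specialh$-transformed walk conditioned to stay negative; the paper phrases this as a bound on $\Q^*[W_n\mid\calG]$ with $\calG$ the spine $\sigma$-algebra, integrating out the off-spine subtrees by repeated use of Lemma~\ref{properties_of_V}(ii). The genuine gap is precisely at the step you flag as the main obstacle. You claim that a uniform bound on the sibling terms (from $\specialh(x)\sim Cx$ and the boundedness of $x\mapsto xe^{-\beta_{\rm c}x}$) together with the qualitative fact $S_n\to-\infty$ yields $\Q^*\{\sup_n W_n<\infty\}=1$. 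Neither half suffices: a uniform bound on the individual conditional means only bounds $\Q^*[W_n\mid\cdot]$ by something of order $n$, and mere transience of $S_j$ does not make the relevant series $\sum_j \specialh(-S_j)\,e^{\beta_{\rm c}S_j}$ converge — if $-S_j$ grew only like $\beta_{\rm c}^{-1}\log j$, the terms would be of order $(\log j)/j$ and the sum would diverge. What is needed, and what the paper imports from \cite{HKK03} as the estimate~\eqref{growth_of_RW}, is a quantitative lower bound $-S_n\ge C_1 n^{1/2-\eps}$ for a walk conditioned to stay positive; this single ingredient is missing from your sketch and the uniform-integrability step does not close without it (or some comparable rate).

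Two further points. First, even with a uniform-in-$n$ bound on the conditional expectations you must still pass to $\Q$-a.s.\ finiteness of $\limsup_n W_n$: Fatou only gives $\liminf_n W_n<\infty$, and the paper upgrades this by noting, via Lemma~\ref{restricted_Q}, that $1/W_n$ is a nonnegative $\Q$-supermartingale, so $W_n$ converges $\Q$-a.s.\ in $[0,\infty]$ and liminf equals limsup. Your stronger claim $\sup_n W_n<\infty$ would bypass this, but a bound on conditional means does not control the supremum without an additional argument (e.g.\ Doob's maximal inequality plus Borel--Cantelli, which again needs the decay rate above). Second, for the law of large numbers along the spine you appeal to Bertoin--Doney; this is plausible but both looser and unnecessary: once~\eqref{growth_of_RW} is in hand — and you need it anyway for the first part — dividing by $n$ gives $\frac1n\sum_{j=1}^n V(\xi_j)\to\la'(\beta_{\rm c})$ $\Q^*$-a.s.\ immediately, which is exactly how the paper concludes.
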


\begin{proof} By a standard measure theoretic result, see for instance Lemma 11.2. in~\cite{LP05}, 
\be{absolute_continuity} \frac{d \, \Q}{d \,\p} = W \iff W < \infty \quad \Q\mbox{-almost surely} \, . \ee
Denote by $\calG = \sigma(V(\xi_k) \, :\, k = 1,2,\ldots)$ the $\sigma$-algebra containing all the 
information about the weights along the spine. 
The first step is to calculate the conditional expectation $\Q^*[ W_n \, |\, \calG]$.
With this in mind, consider a path $v \in T_n$. Decomposing according to the last common 
ancestor with the spine,
\[ \begin{aligned} \Q^* \Big[ & \, \specialh\Big(n  \la'(\beta_{\rm c}) -  \sum_{j=1}^n V(v_j)\Big) \, e^{\beta_{\rm c} \sum_{j=1}^n V(v_j) - n(\la(\beta_{\rm c})+\log d )} \, \1\{ \tV(v) \neq \Delta\} \, \Big| \, \calG \Big]\\
& = \sum_{m=0}^{n} \specialh\Big(m  \la'(\beta_{\rm c}) - \sum_{j=1}^m V(\xi_j)\Big) \, e^{\beta_{\rm c} \sum_{j=1}^m V(\xi_j) - m (\la(\beta_{\rm c})+\log d )  } \, \Q^*\{ \max\{ k \colon v_k = \xi_k \} = m \}
\\ & \qquad \times \E\bigg[ \prod_{i=m+1}^n  \tfrac{\specialh(i  \la'(\beta_{\rm c}) -  \sum_{j=1}^i V(v_j)}{\specialh( (i-1)  \la'(\beta_{\rm c}) -  \sum_{j=1}^{i-1} V(v_j)} \, e^{\beta_{\rm c} V(v_i) - \la(\beta_{\rm c})} \, \1_{\{ \sum_{j=1}^i V(v_j) < i \la'(\beta_{\rm c})\}} \, \bigg| \, \calF_{m} \bigg] \\
& \le \sum_{m=0}^{n}\specialh\Big(m  \la'(\beta_{\rm c}) - \sum_{j=1}^m V(\xi_j)\Big) \, d^{\,-n}
\,  e^{\beta_{\rm c} \sum_{j=1}^m V(\xi_j) - m (\la(\beta_{\rm c})+\log d )  }  \, , \end{aligned} \]
where we used the fact that under $\Q^*$ the weights of the vertices not on the spine have the same distribution as under $\p^*$, so that we can apply Lemma~\ref{properties_of_V}~(ii) repeatedly to show that the conditional expectation of the product is equal to $1$. Summing over all $v\in T_n$ we obtain from the previous equation 
\[ \begin{aligned} \Q^* [  W_n \, | \, \calG ] &  \le \sum_{m=0}^{n}\specialh\Big(m  \la'(\beta_{\rm c}) - \sum_{j=1}^m V(\xi_j)\Big) \,   e^{\beta_{\rm c} \sum_{j=1}^m V(\xi_j) - m (\la(\beta_{\rm c})+\log d )  }
 \, . \end{aligned} \]
Recall that $\sum_{j=1}^n V(\xi_j)$ under $\Q^*$ has the law of a random walk conditioned to stay strictly below $n \la'(\beta_{\rm c})$. 
In other words, $- \sum_{j=1}^n V(\xi_j) + n \la'(\beta_{\rm c})$ follows the law of a random walk conditioned to stay positive. It is known, see for instance~\cite{HKK03} where they treat the case of a random walk conditioned to stay non-negative, that $\Q^*$-almost surely  for any $\eps >0$, there exist constants $C_1,C_2 > 0$ such that for all sufficiently large $n$,
\be{growth_of_RW} C_1 n^{\frac{1}{2} - \eps} \leq - \sum_{j=1}^n V(\xi_j) + n \la'(\beta_{\rm c}) \leq C_2 n^{\frac{1}{2}+\eps} \, . \ee
Hence, using that by Lemma~\ref{properties_of_V}, $h(x)/x \ra C$ as $x \ra \infty$,  the previous estimate shows that, $\Q^*$-almost surely 
\[ \limsup_{n \ra \infty} \Q^*[ W_n \, | \, \calG] < \infty \, . \]
By Fatou's lemma we can conclude that $\liminf_{n \ra \infty}W_n$ is also $\Q^*$-almost surely  finite, so in particular it is $\Q$-almost surely  finite. From the representation in Lemma~\ref{restricted_Q}, we see that $1/W_n$ is a nonnegative super-martingale under $\Q$ and hence it has a $\Q$-almost sure limit. Hence, $\Q$-almost surely  $W =\limsup_{n \ra \infty} W_n = \liminf_{n \ra \infty} W_n < \infty$, so that by~(\ref{absolute_continuity}), $\Q$ is absolutely continuous with respect to $\p$ with Radon-Nikod\'ym derivative $W$.
Moreover, (\ref{growth_of_RW}) shows that $\Q^*$-almost surely
\[ \lim_{n \ra \infty} \frac{1}{n} \sum_{j=1}^n V(\xi_j) = \la'(\beta_{\rm c}) \, . \]\\[-1cm]
 \end{proof}

Now, we are finally in the position to complete the proof of Proposition~\ref{critical_value_particle}.

\begin{proof}[Proof of Proposition~\ref{critical_value_particle} when $\beta_{\rm c} < \infty$] By Lemma~\ref{absolute_continuous}, we know that  $\Q^*$-almost surely, the weights along the spine satisfy
\be{spine_critical_value} \lim_{n \ra \infty} \frac{1}{n} \sum_{j=1}^n V(\xi_j) = \la'(\beta_{\rm c}) \, . \ee
Now projecting down onto $\calF$, we see that $\Q$-almost surely  there exists a ray $\xi \in \partial T$ that satisfies~(\ref{spine_critical_value}). But since $\Q$ is absolutely continuous with respect to $\p$, we can deduce that 
\[ \p \Big\{ \mbox{ there exists } \xi \in \partial T \mbox{ with } \lim_{n\ra\infty}\tfrac{1}{n} \sum_{j=1}^n V(\xi_j) = \la'(\beta_{\rm c}) \Big\} > 0 \, . \]
But the event in question is a tail event with respect to the i.i.d.~family of weights,  so that by Kolmogorov's zero-one law it follows that the event has probability $1$. 
\end{proof}

\subsection{Proof of Proposition~\ref{critical_value_particle} when $\beta_{\rm c} = \infty$}\label{critical_beta_infinite}

We now consider the case that $f$ does not have a positive root.
By Lemma~\ref{criterion_for_root_of_f} this implies that $w = \esssup V$ is finite and
$\mp \{ V = w \} \geq \frac{1}{d}$. 
We start by considering the special case of a Bernoulli disorder. 
Therefore, assume that $\mp \{ V =1 \} = p = 1-\mp \{ V = 0\}$
with $p \geq \frac1d$.
At the end of this
section we will see that it is easy to generalize the result and 
to prove Proposition~\ref{critical_value_particle}
for a general disorder with~$\beta_{\rm c} = \infty$.

\begin{lemma}\label{critical_ray_V_binary} For the Bernoulli disorder with success probability $p \geq \frac{1}{d}$, almost surely, there exists a ray $\xi \in \partial T$ such that
\[ \lim_{n \ra \infty} \frac{1}{n} \sum_{j=1}^n V(\xi_j) = 1 \, . \]
\end{lemma}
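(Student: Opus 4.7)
I would split into the strictly supercritical subcase $p > \frac 1d$ and the critical subcase $p = \frac 1d$.

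In the subcase $p > \frac 1d$ the argument is a simple branching-process observation. The subtree $T^{(1)} \subset T$ consisting of vertices reachable from $\rho$ via a path of weight-$1$ vertices behaves, once one starts at any $v$ with $V(v)=1$, like a Galton-Watson tree with offspring distribution $\mathrm{Bin}(d,p)$; since its mean offspring $dp>1$ is strictly supercritical, it survives with positive probability $q>0$. Consequently the tail event
$$ A = \bigl\{\,\exists\, v\in T\colon V(v)=1 \text{ and } T^{(1)}\cap T(v) \text{ is infinite}\,\bigr\}$$
has probability at least $pq>0$, so by Kolmogorov's zero-one law $\p(A)=1$. On $A$, concatenating any finite path $\rho\to v$ with an infinite all-1 ray in the subtree below $v$ yields a $\xi\in\partial T$ with only finitely many zeros, whence $\frac{1}{n}\sum_{j=1}^n V(\xi_j) \to 1$.

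In the critical subcase $p=\frac 1d$ the tree $T^{(1)}$ is critical and dies out almost surely, so the previous argument collapses, and one must find rays on which the zeros make up a vanishing proportion without being absent. My plan is to adapt the spine decomposition and martingale change-of-measure strategy of Section~\ref{critical_beta_finite}. Because $\beta_{\rm c}=\infty$ and the size-biased variable $V^{\bias}$ degenerates at $\esssup V$, that construction does not transfer verbatim; instead, I would work with the monotone counting walk $S_n = n - \sum_{j=1}^n V(\xi_j)$ (the number of zeros along the spine), introduce a harmonic function $h$ associated to conditioning this walk to remain in a large interval $[0,K]$, and build a truncated $\calF_n$-martingale $(W_n^K)$ whose spined measure $\Q^{K,\bias}$ plants a distinguished ray along which $S_n$ is forced to grow only sublinearly. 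Letting $K\to\infty$ at a suitable rate, the same tail-event plus zero-one law argument as at the end of Section~\ref{critical_beta_finite} would upgrade positive $\Q^{K,\bias}$-probability to $\p$-almost-sure existence of the required ray.

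The main obstacle is the construction and quantitative control of this truncated martingale in the critical Bernoulli setting; in particular, obtaining the sharp random-walk estimates analogous to~\eqref{growth_of_RW} needed to guarantee absolute continuity of the spined measure with respect to $\p$ once the truncation threshold $K$ is sent to infinity.
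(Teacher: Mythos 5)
Your supercritical half is fine: for $p>\frac1d$ the all-ones cluster below a weight-one vertex is a supercritical Galton--Watson tree, the existence of a ray with only finitely many zeros is a tail event, and Kolmogorov's zero-one law upgrades its positive probability to one. This is more elementary than the paper's argument (which treats all $p\ge\frac1d$ uniformly and never isolates the strictly supercritical case). The problem is the critical case $p=\frac1d$, which is precisely the case the lemma is needed for (it produces $1$-percolation at $p_{\rm c}=\frac1d$ in Theorem~\ref{thm_rho_percolation}), and there what you offer is a plan with an acknowledged gap -- and, as stated, a step that would fail.

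Concretely: conditioning the zero-counting walk $S_n=n-\sum_{j=1}^n V(\xi_j)$ to remain in a fixed interval $[0,K]$ forces the spine to carry at most $K$ zeros. But at $p=\frac1d$ there is almost surely \emph{no} ray with at most $K$ zeros, for any fixed $K$: by induction on $K$, the set of vertices whose path to the root uses at most $K$ zeros is a.s.\ finite, since each additional zero restarts only finitely many independent critical Galton--Watson (all-ones) clusters, each a.s.\ finite. Hence every fixed-$K$ spined measure of the kind you describe is concentrated on a $\p$-null event and is singular with respect to $\p$, so the absolute-continuity step collapses, and letting $K\ra\infty$ only afterwards cannot repair it; if instead you intend a barrier $K_n$ growing with $n$ inside a single change of measure, that is exactly the quantitative content you have not supplied, and the conditioned-random-walk estimates of type \eqref{growth_of_RW} do not transfer because the size-biased tilt degenerates at $\beta_{\rm c}=\infty$. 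The paper sidesteps all of this with a much lighter device: it tilts only the spine weights, making $V(\xi_i)$ Bernoulli with parameter $p_i=\max\{i^{-2/i},p\}\uparrow 1$, and bounds the projected martingale by $\Q^*[M_n\mid\calG]\le\sum_{m\le n}(dp)^{-m}\prod_{i\le m}p_i\le\sum_{m}p_m^m$, which is finite since $p_m^m=m^{-2}$ eventually -- the hypothesis $p\ge\frac1d$ enters only through $(dp)^{-m}\le1$, so criticality costs nothing. Fatou plus the supermartingale property of $1/M_n$ give $\Q\ll\p$, dominated convergence gives $\frac1n\sum_{j=1}^n V(\xi_j)\ra1$ under $\Q^*$, and the zero-one law finishes as in your argument. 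Without this device, or a fully worked-out growing-barrier substitute, your proposal does not prove the lemma at $p=\frac1d$.
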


As in the previous Section~\ref{critical_beta_finite} we  use a change of measure
argument. In this case, our aim is to produce a new measure under which the spine
has an asymptotic average weight equal to $1$.

\begin{proof}
Fix $p \in [\frac{1}{d},1)$. Define a sequence $(p_i)_{i \geq 1}$ of increasing numbers in $[p,1)$ that converges
to $1$ by setting 
$p_i = \max\{(\frac{1}{i})^{2/i}, p \}$.
As before, let $\p$ be the probability measure such that the random variables $(V(v) \, : \, v \in T)$
are independent random variables with Bernoulli distribution with success probability $p$.
Next, we extend $\p$ to a probability measure $\p^*$ on the set of spined trees such
that the spine is chosen uniformly. Also, set $\calF^*_n = \sigma(V(v) , |v| \leq n, \xi(j), j \leq n)$
and denote its projection onto the trees with random weights by $\calF_n = \sigma(V(v) , |v| \leq n)$.
Then, we can define a new probability 
measure $\Q^*$ on the set of spined trees by setting 
\[ \left.\frac{d\, \Q^*}{d\, \p^*}\right|_{\calF^*_n} = \prod_{i=1}^n \bigg(\frac{p_i}{p}\bigg)^{V(\xi_i)} 
\bigg(\frac{1-p_i}{1-p}\bigg)^{1-V(\xi_i)}
\, . \]
It is easy to check that the right hand side defines a martingale under $\p^*$, which implies
that the measure $\Q^*$ is well-defined. Moreover, under the new measure the 
spine $\xi$ is still chosen uniformly, but $V(\xi_i)$ is now Bernoulli
with success probability $p_i$, whereas  if $v \neq \xi_i$,
for any $i$, $V(v)$ is still Bernoulli with success probability $p$.

Now, we can define $\Q$ as the projection of $\Q^*$ onto 
$\calF = \sigma( \bigcup_{n \geq 1} \calF_n)$. Then, as before
\[\begin{aligned} \left.\frac{d \,\Q^*}{d\, \p^*}\right|_{\calF_n} & = 
\p^* \Big[\prod_{i=1}^n \left(\frac{p_i}{p}\right)^{V(\xi_i)} 
\left(\frac{1-p_i}{1-p}\right)^{1-V(\xi_i)} \, \Big| \, \calF_n \Big]\\
& = \p^* \Big[ \sum_{v\in T_n} \1_{\{v = \xi_n\}} \prod_{i=1}^n \left(\frac{p_i}{p}\right)^{V(v)} 
\left(\frac{1-p_i}{1-p}\right)^{1-V(v)} \, \Big| \, \calF_n \Big] \\
& = \sum_{v\in T_n} \frac{1}{d^n} \prod_{i=1}^n \left(\frac{p_i}{p}\right)^{V(v)} 
\left(\frac{1-p_i}{1-p}\right)^{1-V(v)} =: M_n \, . \end{aligned} \]
Clearly, $(M_n, n \geq 0)$ defines a martingale with respect to $\p$ and 
the filtration $(\calF_n, n \geq 0)$. As in the proof of Lemma~\ref{absolute_continuous},
our aim will be to show that $M = \limsup_{n \ra \infty} M_n < \infty$, $\Q$-almost surely. 
For this purpose define $\calG = \sigma( V(\xi_i) \, : \, i \geq 1)$ and consider the conditional expectation
\[ \begin{aligned} \Q^* [ M_n \, | \, \calG ] & =
\Q^* \Big[ \sum_{v\in T_n} \sum_{m=0}^n \1_{\{ \max\{ k : v_k = \xi_k \} = m\}}
\frac{1}{d^n}  \prod_{i=1}^n \left(\frac{p_i}{p}\right)^{V(v)} \left(\frac{1-p_i}{1-p}\right)^{1-V(v)} 
\, \Big| \, \calG \Big]\\
& = \sum_{m=0}^n \prod_{i=1}^m \left(\frac{p_i}{p}\right)^{V(\xi_i)} \left(\frac{1-p_i}{1-p}\right)^{1-V(\xi_i)}
\frac{1}{d^n} \, \# \{ v \in T_n \, : \, \max\{ k : v_k = \xi_k \} = m \} \\
& \le \sum_{m=0}^n \frac{1}{d^m} \prod_{i=1}^m \left(\frac{p_i}{p}\right)^{V(\xi_i)} \left(\frac{1-p_i}{1-p}\right)^{1-V(\xi_i)}
\, . 
\end{aligned} \] 
Now, recall that $p_i \geq p$ so that $\frac{1-p_i}{1-p\ }\leq \frac{p_i}{p}$. 
Hence, using that $p_i$ is increasing and $p \geq \frac{1}{d}$, we can deduce 
from the previous display that
\[ \Q^* [ M_n \, | \, \calG ] 
\leq \sum_{m=0}^n \frac{1}{d^m} p^{-m} \prod_{i=1}^m p_i
\leq \sum_{m=0}^n p_m^m  \, . \]
Hence, $\limsup_{n \ra \infty} \Q^* [ M_n \, | \, \calG ] < \infty$, since $p_m^m = \frac{1}{m^2}$ for all $m$
large enough.
Precisely, as in Section~\ref{critical_beta_finite} we can thus deduce by Fatou's lemma
that $\liminf_{n \ra \infty} M_n $ is $\Q^*$-almost surely  finite and thus $\Q$-almost surely  finite. 
By construction, $\frac{1}{M_n}$ is a positive $\Q$-martingale, which implies that its limit exists and
hence $M = \lim_{n \ra \infty} M_n < \infty$, $\Q$-almost surely.  Therefore, 
$\Q$ is absolutely continuous with respect to $\p$ with Radon-Nikod\'ym derivative $M$.

We have seen that $\Q^*[V(\xi_i)] = p_i$. 
Since $p_i \ra 1$ as $i \ra \infty$,  it is clear that $\lim_{n \ra \infty} \frac{1}{n} \Q^*[ \sum_{j=1}^n V(\xi_j) ]
= \lim_{n \ra \infty} \frac{1}{n} \sum_{j=1}^n p_j = 1$. Now $0 \leq V(\xi_i) \leq 1$ so that by Lebesgue's
dominated convergence theorem, 
\[ \Q^* \Big[ \limsup_{n \ra \infty} \Big( 1- \frac{1}{n} \sum_{j=1}^n V(\xi_j) \Big) \Big]
\le 1 -\lim_{n \ra \infty} \Q^* \Big[\frac{1}{n} \sum_{j=1}^n V(\xi_j)\Big] = 0 \, . \]
Since $0\le 1- \frac{1}{n} \sum_{j=1}^n V(\xi_j) \le 1$, we deduce that 
$\Q^*$-almost surely  $\lim_{n\ra \infty} \frac{1}{n} \sum_{j=1}^n V(\xi_j) = 1$.

Hence, $\Q$-almost surely,  there exists a ray $\xi \in\partial T$ such that
$\lim_{n\ra \infty} \frac{1}{n} \sum_{j=1}^n V(\xi_j) = 1$. As $\Q$
is absolutely continuous with respect to $\p$ it follows that 
\[ \p \Big\{ \mbox{ there exists } \xi \in \partial T \mbox{ with } 
\lim_{n \ra \infty} \frac{1}{n} \sum_{j=1}^n V(\xi_j) = 1 \Big\} > 0 \, . \]
As in the previous section, we deduce from Kolmogorov's zero-one law that
this probability is in fact equal to $1$, so that we have proved Lemma~\ref{critical_ray_V_binary}.
\end{proof}

We now use the previous lemma for the Bernoulli disorder to complete the proof of Proposition~\ref{critical_value_particle}. Assume that $V$ is any random variable 
such that the corresponding function $f$ has no positive root. Recall that this means
that $\mp \{ V = w \} \geq \frac{1}{d}$ for $w = \esssup V<\infty$.

\begin{proof}[Proof of Proposition~\ref{critical_value_particle} when $\beta_{\rm c} = \infty$]
Given the disorder $(V(v), v \in T)$, define the random variables $\tilde{V}(v) = \1\{ V(v) = w \}$. Then
$p := \p\{ \tilde{V}(v) = 1 \} = \p\{ V(v) = w \} \geq \frac{1}{d}$.  
Lemma~\ref{critical_ray_V_binary} shows that there exists a ray $\xi \in \partial T$ such that
\[ \lim_{n \ra \infty} \frac{1}{n} \sum_{j=1}^n \tilde{V}(\xi_j) = 1 \, . \]
Therefore, 
\[ \liminf_{n \ra \infty} \frac{1}{n} \sum_{j=1}^n V(\xi_j) \geq  \liminf_{n \ra \infty} \frac{1}{n} \sum_{j=1}^n V(\xi_j)
\1\{ V(\xi_j) = w \} \geq w \liminf_{n \ra\infty}\frac{1}{n} \sum_{j=1}^n
\tilde{V}(\xi_j)  = w \, . \]
Finally, as the reversed inequality is trivial, we have completed the proof. 
\end{proof}

\section{$\varrho$\,-percolation on regular trees}\label{rho_percolation}

We now show how the directed polymer model on trees can be interpreted in the framework of 
$\varrho$-percolation. Consider a $d$-ary tree $T$ as before and, for $p \in [0,1]$, define the 
disorder ${\mathcal V}_p=(V_p(v) \colon v \in T)$ as a family of i.i.d. Bernoulli random variables with 
success parameter $p$. An edge leading to a vertex $v$ with weight $V_p(v) = 1$ is considered 
to be open and if $V_p(v) = 0$ it is defined to be closed. For $\varrho \in [p,1]$, we say
that $\varrho$-percolation occurs if there exists a path $\xi \in \partial T$ such that
\[ \liminf_{n \ra \infty} \frac{1}{n} \sum_{j=1}^n V_p(\xi_j) \geq \varrho \, . \]

\begin{lemma}\label{fixed_p}
Fix $p\in(0,1)$ and  let $\la_p(\beta) = \log \me [ e^{\beta V_p} ]$. Let
$\alpha_{\rm c}(p)=1$, if  $p \geq \frac{1}{d}$, and otherwise let $\alpha_{\rm c}(p)$
be the unique solution of $\la_p^*(\alpha)=\log d$ in the interval $(p,1)$.
Then, if $\alpha \leq \alpha_{\rm c}(p)$, almost surely, there exists $\xi \in \partial T$ such that 
$$\liminf_{n \ra \infty} \frac{1}{n} \sum_{j=1}^n V_p(\xi_j) 
\geq \alpha,$$
but if $\alpha > \alpha_{\rm c}(p)$ almost surely no such $\xi \in \partial T$ exists.
\end{lemma}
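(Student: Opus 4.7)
The plan is to translate the claim into a statement about the polymer model with Bernoulli disorder $V_p$ and then invoke the results already established. Writing $\la_p(\beta)=\log\me[e^{\beta V_p}]$ and $\la_p^*$ for its Legendre--Fenchel transform, the identity $f_p(\beta)=\log d-\la_p^*(\la_p'(\beta))$ from Section~\ref{properties_of_f} shows that $\alpha_{\rm c}(p)$ is precisely $\la_p'(\beta_{\rm c}(p))$, where $\beta_{\rm c}(p)$ is the critical inverse temperature for the Bernoulli polymer model, with the convention $\la_p'(\infty)=\esssup V_p$. Indeed, when $p\ge 1/d$, Lemma~\ref{criterion_for_root_of_f} gives $\beta_{\rm c}(p)=\infty$ and $\la_p'(\beta_{\rm c}(p))=1=\alpha_{\rm c}(p)$; when $p<1/d$ the equation $\la_p^*(\alpha)=\log d$ admits, by strict convexity of $\la_p^*$ on its domain together with the boundary values $\la_p^*(p)=0$ and $\la_p^*(1)=-\log p>\log d$, a unique solution in $(p,1)$, which is exactly $\la_p'(\beta_{\rm c}(p))$.

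For the existence statement when $\alpha\le\alpha_{\rm c}(p)$, I would apply Proposition~\ref{critical_value_particle} directly to the disorder $V_p$. This produces, almost surely, a ray $\xi\in\partial T$ with
\[ \lim_{n\to\infty}\frac{1}{n}\sum_{j=1}^n V_p(\xi_j)=\la_p'(\beta_{\rm c}(p))=\alpha_{\rm c}(p)\ge\alpha, \]
and in particular $\liminf_{n\to\infty}\frac{1}{n}\sum_{j=1}^n V_p(\xi_j)\ge\alpha$. The proposition treats the cases $\beta_{\rm c}(p)<\infty$ and $\beta_{\rm c}(p)=\infty$ uniformly, which is precisely what is needed to cover both regimes $p<\tfrac1d$ and $p\ge\tfrac1d$.

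For the non-existence statement when $\alpha>\alpha_{\rm c}(p)$, I would argue by a first-moment bound combined with the Borel--Cantelli lemma. Choose $\alpha'$ with $\alpha_{\rm c}(p)<\alpha'<\alpha$; by strict convexity and continuity of $\la_p^*$ on $(p,1)$ one has $\la_p^*(\alpha')>\log d$. Cram\'er's theorem for sums of iid Bernoulli$(p)$ random variables yields $\p\{\sum_{j=1}^n V_p(v_j)\ge\alpha' n\}\le e^{-n\la_p^*(\alpha')}$, so a union bound over the $d^n$ vertices of $T_n$ gives
\[ \me\Big[\#\big\{v\in T_n:\textstyle\sum_{j=1}^n V_p(v_j)\ge\alpha' n\big\}\Big]\le e^{n(\log d-\la_p^*(\alpha'))}, \]
which is summable in $n$. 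By Borel--Cantelli, almost surely for all sufficiently large $n$ no vertex $v\in T_n$ satisfies $\frac1n\sum_{j=1}^n V_p(v_j)\ge\alpha'$; consequently no ray $\xi\in\partial T$ can have $\liminf_{n\to\infty}\frac{1}{n}\sum_{j=1}^n V_p(\xi_j)\ge\alpha$.

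The genuine difficulty in the lemma lies entirely in the existence half at the critical value $\alpha=\alpha_{\rm c}(p)$, and this is precisely what Proposition~\ref{critical_value_particle} is built to handle, via the martingale change-of-measure constructions of Section~\ref{localisation_critical_tree}; the non-existence half above the critical value, by contrast, is only a routine large-deviations calculation.
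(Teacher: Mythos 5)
Your proof is correct. The existence half is essentially the paper's own argument: identify $\alpha_{\rm c}(p)=\la_p'(\beta_{\rm c}(p))$ (with the convention $\la_p'(\infty)=1$) via Lemma~\ref{criterion_for_root_of_f} and the relation $f_p(\beta)=\log d-\la_p^*(\la_p'(\beta))$, then invoke Proposition~\ref{critical_value_particle} to produce a ray whose average weight converges to $\alpha_{\rm c}(p)$. Where you diverge is the non-existence half: you rerun a first-moment argument from scratch (Chernoff bound $\p\{\sum_{j\le n}V_p(v_j)\ge \alpha' n\}\le e^{-n\la_p^*(\alpha')}$ with $\la_p^*(\alpha')>\log d$, union bound over $T_n$, Borel--Cantelli), whereas the paper gets the same conclusion more compactly by quoting the already-proved free energy formula~(\ref{free_energy}): for any ray, $\liminf_n \frac1n\sum_{j\le n}V_p(\xi_j)\le \frac1{\beta_{\rm c}}\liminf_n\frac1n\log Z_n(\beta_{\rm c})=\phi_p(\beta_{\rm c})/\beta_{\rm c}=\alpha_{\rm c}(p)$. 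Your route is more elementary and self-contained (it does not lean on the strong-disorder part of the free energy formula), at the cost of redoing a large-deviation estimate the paper already has packaged; the paper's route is shorter given its earlier results. One small wrinkle: when $p\ge\frac1d$ you have $\alpha_{\rm c}(p)=1$, so your $\alpha'$ lies above $1$ and the justification ``$\la_p^*(\alpha')>\log d$ by strict convexity on $(p,1)$'' does not literally apply; there the non-existence statement is trivial anyway (the averages are bounded by $1$, equivalently $\la_p^*(\alpha')=+\infty$), and it would be worth saying so explicitly.
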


\begin{proof}
Using Lemma~\ref{criterion_for_root_of_f} we see that the critical parameter $\beta_{\rm c}=\beta_{\rm c}(p)$ for the
polymer model with disorder ${\mathcal V}_p$ is infinite if $p \geq \frac{1}{d}$ and finite otherwise.
In the latter case, this implies that $\alpha_{\rm c}(p)$ is well-defined and $\alpha_{\rm c}(p) = \la_p'(\beta_{\rm c})$. 
From Proposition~\ref{critical_value_particle} we hence obtain in both cases
that there exists a ray $\xi \in \partial T$ 
such that \[ \lim_{n\ra \infty} \frac{1}{n} \sum_{j=1}^n V_p(\xi_j) = \alpha_{\rm c}(p) \, . \]
To show that there is no ray~$\xi\in\partial T$ along which we obtain a larger liminf, 
we may assume that $p < \frac{1}{d}$. Recall that, by~(\ref{free_energy}), 
the free energy
\[ \phi_p(\beta) = \lim_{n \ra \infty} \frac1n \log \sum_{v\in T_n} e^{\beta \sum_{j=1}^n V_p(v_j) }\]
satisfies $\phi_p(\beta_{\rm c}) = \beta_{\rm c} \alpha_{\rm c}(p)$. Hence, for any ray $\xi \in \partial T$,
\[ \liminf_{n \ra \infty} \frac{1}{n} \sum_{j=1}^n V_p(\xi_j) \leq 
\frac{1}{\beta_{\rm c}} \liminf_{n \ra \infty}  \frac{1}{n} \log \sum_{v\in T_n} e^{\beta_{\rm c} \sum_{j=1}^n V_p(v_j) } 
= \frac{\phi_p(\beta_{\rm c})}{\beta_{\rm c}}
= \alpha_{\rm c}(p) \, , \]
which proves the second part of Lemma~\ref{fixed_p}.
\end{proof}

As the next step, we give an explicit formula for $\alpha_{\rm c}(p)$ when $p < \frac{1}{d}$. 
First, we compute the logarithmic moment generating function and its derivative
\[ \la_p(\beta) = \log \me [ e^{\beta V_p}] = \log (p e^\beta + (1-p)) \quad \mbox{and} \quad \la_p'(\beta) = \frac{p e^\beta}{p e^\beta + (1-p)} \, . \]
Then, using that $\alpha_{\rm c}(p)=\la'_p(\beta_{\rm c}(p))$ 
for the polymer with disorder~${\mathcal V}_p$, we get
\begin{equation}\label{alpha}
\alpha_{\rm c}(p)= \frac{p e^{\beta_{\rm c}(p)}}{p e^{\beta_{\rm c}(p)} + (1-p)}.
\end{equation}
As $\log d=\lambda^*(\alpha_{\rm c}(p))=\alpha_{\rm c}(p)\beta_{\rm c}(p)-\log (p e^{\beta_{\rm c}(p)} + (1-p))$, we obtain
\be{alpha_p_critical} p^{\alpha_{\rm c}(p)} (1-p)^{1-\alpha_{\rm c}(p)} d = \alpha_{\rm c}(p)^{\alpha_{\rm c}(p)} (1-{\alpha_{\rm c}(p)})^{1-{\alpha_{\rm c}(p)}} \, . \ee
It is easy to see from Lemma~\ref{fixed_p} that $\alpha_{\rm c}(\,\cdot\,)$ is an increasing function 
on $(0,\frac1d]$, and from~\eqref{alpha_p_critical} that it is strictly increasing. 

To complete the proof of Theorem~\ref{thm_rho_percolation} fix $\varrho \in (0,1]$. First note (by taking 
the derivative) that the function $g(p)=p^\varrho (1-p)^{1-\varrho}$ is strictly increasing on the interval 
$(0,\varrho]$  so that there is indeed a unique solution to the equation characterising~$p_{\rm c}$. In the 
special case $\varrho=1$ this solution is given by $p_{\rm c}=\frac 1d$. Back to the general case, by \eqref{alpha_p_critical} we have $\varrho = \alpha_{\rm c}(p_{\rm c})$. 
This value $p_{\rm c}$ is indeed the critical parameter, since if $p \geq p_{\rm c}$ we have $\alpha_{\rm c}(p) \geq \alpha_{\rm c}(p_{\rm c}) = \varrho$ so that $\varrho$-percolation occurs by Lemma~\ref{fixed_p}. Moreover, if $p < p_{\rm c}$, then $\alpha_{\rm c}(p) < \alpha_{\rm c}(p_{\rm c}) = \varrho$ 
so that $\varrho$-percolation does not occur, which completes the proof of Theorem~\ref{thm_rho_percolation}.


\end{document}